 \newtheorem{thm}{Theorem}[section]
 \newtheorem{cor}[thm]{Corollary}
 \newtheorem{lem}[thm]{Lemma}
 \newtheorem{prop}[thm]{Proposition}
 \theoremstyle{definition}
 \newtheorem{defn}[thm]{Definition}
 \theoremstyle{remark}
 \newtheorem{rem}[thm]{Remark}
 \numberwithin{equation}{section}
\newcommand{\R}{\boldsymbol{R}}
\newcommand{\C}{\boldsymbol{C}}
\newcommand{\trace}{\operatorname{trace}}
\newcommand{\herm}{\operatorname{Herm}}
\newcommand{\im}{\operatorname{Im}\!}
\newcommand{\re}{\operatorname{Re}\!}
\renewcommand{\phi}{\varphi}
\newcommand{\sgn}{\operatorname{sgn}}
\newcommand{\inner}[2]{\left\langle{#1},{#2}\right\rangle}
\newcommand{\spann}[1]{\left\langle{#1}\right\rangle}
\newcommand{\E}{\mathcal{E}}
\newcommand{\ep}{\varepsilon}
\newcommand{\pmt}[1]{{\begin{pmatrix} #1  \end{pmatrix}}}
\newcommand{\mycomment}[1]{}
\begin{document}

%
%
%
%
%
%
%
%
%

\title[Dualities of invariants on cuspidal edges]
 {Dualities of differential geometric invariants 
on cuspidal edges on flat fronts 
in the hyperbolic space and the de Sitter space}

\author[K. Saji]{Kentaro Saji}

\address[K. Saji]{%
Department of Mathematics, 
Graduate School of Science, 
Kobe University, 
Rokkodai 1-1, Nada, Kobe 
657-8501, Japan}
\email{sajiO\!\!\!amath.kobe-u.ac.jp}
\thanks{Partly supported by the JSPS KAKENHI Grant Number 18K03301 
and the JSPS KAKENHI Grant Number 17J02151.}
\author[K. Teramoto]{Keisuke Teramoto}
\address[K. Teramoto]{%
Department of Mathematics, 
Graduate School of Science, 
Kobe University, 
Rokkodai 1-1, Nada, Kobe 
657-8501, Japan}
\email{teramotoO\!\!\!amath.kobe-u.ac.jp}
\subjclass[2010]{Primary 53A55; Secondary 53A35, 57R45}

\keywords{Cuspidal edge, flat front, duality}


\begin{abstract}
{
We compute the differential geometric invariants
of cuspidal edges on flat surfaces in hyperbolic $3$-space
and in de Sitter space.
Several dualities of invariants are pointed out.
}
\end{abstract}
\maketitle
\section{Introduction}

In \cite{gmm}, 
one Weierstrass type representation formula for flat surfaces
in the hyperbolic $3$-space $H^3$ is obtained,
and two representations are given in \cite{kuy}.
In \cite{kuy}, a notion {\it flat fronts} for
flat surfaces which admit certain singularities
is defined and the Weierstrass type representation formula
is extended to the representation formula for flat fronts.
This formula represents flat fronts via 
two holomorphic functions (called the {\it Weierstrass data}\/).
In \cite{KRSUY}, 
criteria for cuspidal edges and swallowtails 
are given in terms of the Weierstrass data,
and it is shown that these singularities are 
generic singularities of 
flat fronts in $H^3$. 
For a flat fronts in $H^3$, it is known that 
its unit normal vector taking the value
in the de Sitter $3$-space $S^3_1$ 
(called the de Sitter Gauss map\/) 
is also a flat front in $S^3_1$,
and it is constructed from the same data. 
It is known that the sets of singular points of
these two flat fronts coincide.
{Flat fronts are special cases of linear Weingarten
fronts.
In \cite{kokuumelw}, a global Weierstrass type 
representation of linear Weingarten fronts 
is given, and 
global properties of them 
are studied and correspondence
between dual surfaces of them are given 
(see \cite{kokuumelw} for detail, see also \cite{gmmlw,kokubulw}).}

One can regard that these two flat surfaces
are in a dual relation.
It is introduced in \cite{izdual},
that a formulation considering
this duality as a double Legendrian fibration 
in contact geometry.
See \cite{istaka,ismandala} for studies of inear Weingarten surfaces
from this viewpoint.
On the other hand, 
a {\it front\/} is a surface in a $3$-space
with well-defined unit normal vector
even on the set of singular points.
Since there is a unit normal vector, 
fronts can be studied from the differential geometric viewpoint. 
Many geometric invariants of cuspidal edges and other 
singular points
are introduced and geometric properties of them
are investigated.
(see \cite{
ribaucour,fukuicesw,hhnsuy,hnuy,
istce,martimila,invce,
MSUY,OT,front,terapara,teraprin,terafocal}
for example). 
Since flat fronts are determined by the Weierstrass data,
it is natural to consider a relation between
the data and the invariants of singular points.
This relation can be regarded 
as geometric meanings of the data
on singular points.

In this paper, 
we give explicit expressions for invariants of cuspidal edges in terms of 
the Weierstrass data $(\alpha,\beta)$ (Theorem \ref{prop:curvatures}\/).
We can observe several dualities of
geometric invariants and singularities 
of cuspidal edges on flat fronts in $H^3$ and $S^3_1$.
Using such expressions, we characterize the condition that 
the set of singular points which consists of cuspidal edges 
is a line of curvature (Proposition \ref{prop:curvline}\/). 
Furthermore, we show a relation between lines of curvature 
and cone-like singular points (Corollary \ref{cor:conelike}\/). 
For geometric meanings of cone-like singularities of flat fronts in $H^3$, 
see \cite{gmisol,KRSUY,kuy}.

\section{Preliminaries}
\subsection{Fronts in $H^3$ and $S^3_1$}
Let $\R^4_1$ be the Lorentz-Minkowski $4$-space 
with the inner product $\inner{~}{~}=({-}{+}{+}{+})$.
Let
\begin{align*}
H^3 &=\{ x=(x_0,x_1,x_2,x_3) \in \R^4 _1 \vert 
\inner{x}{x} = -1 ,x_0 >0  \},\\
S^3_1 &=\{ x \in \R^4 _1 \vert \inner{x}{x} = 1\}
\end{align*}
be the hyperbolic and the de Sitter $3$-spaces.
Although dual relations of surfaces in 
these two pseudo-spheres has been known
(see \cite[(2.3)]{kruycaus},\cite[(1.9)]{kruyasym} for example),
we use the formulation in \cite{izdual} to
working on Legendrian dualities.
Following \cite{izdual}, we set $\Delta_1\subset H^3\times S^3_1$ by
$$
\Delta_1
=\{(x,y)\in H^3\times S^3_1\,|\,
\inner{x}{y}=0\},
$$
and set
$
\pi_1:\Delta_1\to H^3$,
$
\pi_2:\Delta_1\to S^3_1
$
by
$$
\pi_1(x,y)=x,\quad
\pi_2(x,y)=y.
$$
Moreover, we set two $1$-forms
$$
\theta_1=-x_0\,dy_0+\sum_{i=1}^3x_i\,dy_i\Big|_{\Delta_1},\quad
\theta_2=-y_0\,dx_0+\sum_{i=1}^3y_i\,dx_i\Big|_{\Delta_1}
$$
Then $\theta_1^{-1}(0)$ and $\theta_2^{-1}(0)$ 
determine the same tangent plane field over $\Delta_1$
which is denoted by $K$.
It is well-known that the pair $(\Delta_1,K)$ is a contact
manifold and $\pi_1$ and $\pi_2$ are Legendrian fibrations
(\cite[Theorem 2.2]{izdual}).
This formulation is introduced for investigating
surfaces in the lightcones.
See \cite{izdual} for other dualities among pseudo-spheres.
It should be remarked that dualities of hypersurfaces 
in pseudo-spheres
in the Lorentz-Minkowski
space are also found independently (see \cite{egm,liujung,liuuy}).

Let $(N,ds^2)$ be a Riemannian or a semi-Riemannian $3$-manifold.
Let $U\subset \R^2$ be a domain.
A map $k:U\to N$ is a {\it frontal\/}
if there exists a map $L: U\to T_1N$ such that
$\pi\circ L=k$ and $ds^2(dk(X),L(p))=0$, where
$\pi:T_1N\to N$ is the unit tangent bundle.
The lift $L$ is called an {\it isotropic lift\/}
of $k$.
A frontal $k$ is a {\it front\/} if an isotropic lift
can be taken as an immersion.
Since the unit tangent bundle of $H^3$ can be
identified with $H^3\times S^3_1$, we can rewrite this 
setting by using $\Delta_1$.
A map $L=(f,g):U\to\Delta_1$ is {\it isotropic\/}
if $L^*\theta_1=0$.
A map $f:U\to H^3$ (respectively, $g:U\to S^3_1$)
is a frontal if there exists a map $g:U\to S^3_1$ 
(respectively, $f:U\to H^3$)
such that the pair $(f,g):U\to \Delta_1$ is isotropic.
A frontal $f:U\to H^3$ (respectively, $g:U\to S^3_1$)
is a front if the map $L=(f,g)$ can be taken as an
immersion.
If $(f,g):U\to\Delta_1$ is isotropic, 
then we say that $f$ and $g$ are $\Delta_1$-{\it dual\/} each other,
$g$ is a $\Delta_1$-{\it dual\/} of $f$, and
$f$ is a $\Delta_1$-{\it dual\/} of $g$.

\subsection{Matrix representation of $H^3$ and $S^3_1$}
Let $\herm(2)$ be the set of $2\times 2$ Hermitian matrices.
We take elements $e_0$, $e_1$, $e_2$, $e_3\in \herm(2)$ as
$$
e_0
=\pmt{1&0\\0&1},\quad
e_1
=\pmt{0&1\\1&0},\quad
e_2
=\pmt{0&i\\-i&0},\quad
e_3
=\pmt{1&0\\0&-1},
$$
where $i=\sqrt{-1}$.
Then we have an identification $\iota:\R^4\to \herm(2)$
$$
\iota(x)
=
\sum_{k=0}^3 x_k e_k =
\pmt{
x_0 +x_3  & x_1 + i x_2 \\
x_1 - i x_2 & x_0 - x_3 
},
$$
where $x=(x_0, x_1, x_2, x_3)$
with the metric
$$
\inner{X}{Y}=
-\dfrac{1}{2}\trace\Big(X\,e_2\,Y\,e_2\Big),
$$
$X,Y\in \herm(2)$.
In particular, 
$$
\inner{X}{X} =-  \det X \quad(X\in \herm(2)).
$$
By this identification,
$H^3$ and $S^3_1$ are rewritten as
\begin{align*}
H^3&=\{X\in\herm(2)\,|\,\det X=1, \trace X>0\}
=\{AA^*\,|\,A\in SL(2,\C)\},\\
S^3_1&=\{X\in\herm(2)\,|\,\det X=-1\}
=\{Ae_3A^*\,|\,A\in SL(2,\C)\}.
\end{align*}
Furthermore, the exterior product in
$T_pH^3$ and $T_pS^3_1$ are rewritten as
$$
X \times Y =\frac{i}{2}(Xp^{-1}Y-Y p^{-1}X)
$$
for $X,Y\in T_pH^3$, or $X,Y\in T_pS^3_1$.

\subsection{Singularities of fronts and their differential geometric invariants}
Let $(N,ds^2)$ be a Riemannian or a semi-Riemannian $3$-manifold.
Let $U\subset \R^2$ be a domain,
and $k:U\to N$ a front.
For $p\in U$, considering 
$L(p)=(p,\nu(p))\in T_p^1N$,
we consider the {\it signed area density function\/}
$$
\Omega(k_u(u,v),k_v(u,v),\nu(u,v))
$$
for a coordinate system $(u,v)$, and 
$(~)_u=\partial/\partial u$,
$(~)_v=\partial/\partial v$,
where $\Omega$ is the volume form.
A function $\lambda:U\to \R$ is called an
{\it identifier of singularity\/}
if it is a non-zero functional multiplication of 
$\Omega(k_u(u,v),k_v(u,v),\nu(u,v))$.
A singular point $p$ of $k$ is called {\it non-degenerate\/}
if $d\lambda(p)\ne0$, where $\lambda$ is an identifier of singularity.

Let $p\in U$ be a non-degenerate singular point 
of a front $k:U\to N$.
Then by the non-degeneracy of $p$, 
the set of singular points $\Sigma(k)$ is a regular curve near $p$,
and hence we take a parameterization $\gamma(t)$ $(\gamma(0)=p)$ of it.
We call $\gamma$ a {\it singular curve}.
We set $\hat\gamma=k\circ\gamma$.
One can show that
there exists a vector field $\eta$
on a sufficient small neighborhood of $p$
such that if $p\in \Sigma(k)$, then
$
\ker dk_p=\spann{\eta_p}.
$
We call $\eta$ the {\it null vector field}.
The restriction of $\eta$ on $\Sigma(k)$ can be parameterized by
the parameter $t$ of $\gamma$.
We denote by $\eta(t)$ the null vector field along $\gamma$.
\begin{defn}
Let $\R^3$ be the Euclidean $3$-space and 
$k:(\R^2,0)\to(\R^3,0)$ a $C^\infty$ map-germ.
The map-germ $k$ at $0$ is a {\it cuspidal edge\/}
(respectively, a {\it swallowtail\/})
if is is ${\mathcal A}$-equivalent to the map-germ
$(u,v)\mapsto(u,v^2,v^3)$ 
(respectively, $(u,v)\mapsto(u,4v^3+2uv,3v^4+uv^2)$)
at the origin.
Here, two map-germs $g_1,g_2:(\R^2,0)\to(\R^3,0)$
are {\em ${\mathcal A}$-equivalent\/}
if there exist diffeomorphism-germs
$\Xi_s:(\R^2,0)\to(\R^2,0)$ and
$\Xi_t:(\R^3,0)\to(\R^3,0)$
satisfying $g_2\circ \Xi_s=\Xi_t\circ g_1$.
\end{defn}
If $k:(\R^2,0)\to(\R^3,0)$ is a cuspidal edge or
a swallow, then it is a front and $0$ is a non-degenerate
singular point.
There are useful criteria for them.
Let $k:U\to \R^3$ be a front and $p\in U$ a non-degenerate
singular point.
Let $\gamma(t)$ be a parametrization $(\gamma(0)=p)$
of the singular curve, and 
$\eta(t)$ a null vector field.
We set $\delta(t)=\det(\gamma'(t),\eta(t))$.
For a non-degenerate singular point $p$ of $k$,
the map-germ $k$ at $p$ is cuspidal edge 
(respectively, swallowtail) if and only if
$\delta(0)\ne0$
(respectively, $\delta(0)=0, \delta'(0)=0$).
See \cite[Proposition 1.3]{KRSUY}.

Let $M$ be $H^3$ or $S^3_1$.
Let $f:U\to M$ be a front and let $f$ at $p$ 
be a cuspidal edge, and $g$ a $\Delta_1$-dual of $f$.
A pair of vector fields $(\xi,\eta)$ is called
{\it adapted\/}
if $\xi$ is tangent to the singular set and
$\eta$ is a null vector field of $f$.
By the criterion for cuspidal edge, $\eta\lambda\ne0$ holds.
Thus 
$\xi f$ and $\nabla_\eta \eta f$ are linearly independent,
in particular, $\xi f\ne0$ and $\xi f\times\nabla_\eta \eta f\ne0$
hold.
We define
\begin{align}
\label{eq:ksknktkikc}
\begin{aligned}
\kappa_s(t)=&
\sgn(\lambda_\eta) \frac{\Omega(\xi f,\,\nabla_\xi(\xi f),\,g)}
{|\xi f|^3}\Bigg|_{(u,v)=\gamma(t)},\quad
\kappa_n(t)=
 \frac{\inner{\nabla_\xi(\xi f)}{g}}{|\xi f|^2}\Bigg|_{(u,v)=\gamma(t)},\\
\kappa_t(t)=&
 \frac{\Omega(\xi f,\,\nabla_\eta(\eta f),\,\nabla_{\xi}\nabla_{\eta}(\eta f))}
{|\xi f\times \nabla_{\eta}(\eta f)|^2}\\
&\hspace{10mm}
-
\frac{\Omega(\xi f,\,\nabla_{\eta}(\eta f),\,\nabla_{\xi}(\xi f))
\inner{\xi f}{\nabla_{\eta}(\eta f)}}
{|\xi f|^2|\xi f\times \nabla_{\eta}(\eta f)|^2}
\Bigg|_{(u,v)=\gamma(t)},\\
\kappa_c (t)=&
\frac{|\xi f|^{3/2} \Omega(\xi f,\ \nabla_{\eta}(\eta f),\
\nabla_{\eta}\nabla_{\eta}(\eta f))}
{|\xi f\times \nabla_{\eta}(\eta f)|^{5/2}}
\Bigg|_{(u,v)=\gamma(t)},
\end{aligned}
\end{align}
where $\gamma(t)$ is a parametrization of $\Sigma(f)$,
$\lambda$ is an identifier of singularity, and
$\Omega$ is the volume form and 
under the identification $T_p\R^4_1=\R^4_1$,
it can be calculated by
$\Omega(X,Y,Z)=\inner{X\times Y}{Z}$
for $X,Y,Z\in T_{f(p)}H^3$ or 
for $X,Y,Z\in T_{f(p)}S^3_1$. 
Here, $\nabla$ is the metric connection
defined
for a vector $\zeta\in T_{f(p)}H^3$,
$$
\nabla_\zeta k
=
\zeta k+\inner{\zeta k}{f}f\in T_{f(p)}H^3,
$$
and
for a vector $\zeta\in T_{f(p)}S^3_1$,
$$
\nabla_\zeta k
=
\zeta k-\inner{\zeta k}{g}g\in T_{f(p)}S^3_1.
$$
The above functions $\kappa_s$, $\kappa_n$, $\kappa_t$ and $\kappa_c$
do not depend on the choices of the parameters of $\Sigma(f)$, $\Sigma(g)$
nor on the choice of $(\xi,\eta)$, and they are called
{\it singular curvature\/}, {\it limiting normal curvature\/},
{\it cusp-directional torsion\/} 
({\it cuspidal torsion\/})
and {\it cuspidal curvature\/} respectively.
See \cite{front} (for $\kappa_s$ and $\kappa_n$), 
\cite{invce} (for $\kappa_t$) and \cite{MSUY} (for $\kappa_c$) 
for detail.

\section{Flat fronts in $H^3$ and $S^3_1$}\label{sec:ffhs}
Let $U\subset\C$ be a domain, and 
$\alpha ,\beta:U\to\C\setminus\{0\}$ holomorphic functions.
We consider a 
solution $A:U \to SL(2,\C)$ 
of the differential equation
\begin{equation}\label{eq:aprime}
A'(z)
=A(z)D(z)\quad
\left(D(z)=
\pmt{
0  & \alpha(z) \\
\beta(z) & 0
}\right),\quad
{}'=\frac{d}{dz}.
\end{equation}
Then one can see that 
$\det A$ is a constant, 
we take a solution satisfying $\det A=1$.
We define maps
$f:U\to H^3$ and $g:U\to S^3_1$ by
\begin{equation}\label{eq:fandg}
f(z)=A(z)A^*(z),\quad
g(z)=A(z)e_3A^*(z).
\end{equation}
Then it is known that $f$ is (zero intrinsic curvature)
and $g$ is spacelike flat on their regular point sets
\cite[Proposition 2.5]{kuy}.
Furthermore, $f$ and $g$ are $\Delta_1$-dual each other.

By a calculation, we have
\begin{equation}\label{eq:fgbibun}
f' = ADA^*,\quad
f_{\overline{z}} = AD^*A^*,\quad
g' = ADe_3A^*,\quad
g_{\overline{z}} = Ae_3D^*A^*.
\end{equation}
Thus by
$$
f' \times f_{\overline{z}}
=\frac{i}{2}
\lambda
A
e_3A^*,\quad
g' \times g_{\overline{z}}
=-\frac{i}{2}
\lambda
AA^*,
$$
the singular sets $\Sigma(f)$ and $\Sigma(g)$ are coincide,
where $\lambda$ is defined by
\begin{equation}\label{eq:deflambda}
\lambda=\alpha \overline{\alpha} -\beta \overline{\beta}.
\end{equation}
We see that $\lambda$
is an identifier of singularity for each $f$ and $g$.
{It is also known that 
the set of singular points of flat front 
in the $3$-sphere $S^3$ and that of its dual coincide.
In \cite{kitaume},
a characterization of the
flat torus in $S^3$ is obtained.}

\subsection{Criteria for singularities of $f,g$}
We give conditions for cuspidal edges and swallowtails appearing 
on the flat fronts in $H^3$ and in $S^3_1$
by the Weierstrass data $(\alpha,\beta)$.
The case of the flat fronts in $H^3$, it is obtained in
\cite[Theorem 1.1]{KRSUY}, and 
the case of the flat fronts in $S^3_1$, the similar conditions 
are obtained in \cite[Proposition 6]{fnssy}.
We shall state here conditions for cuspidal edges and swallowtails on 
flat fronts in $S^3_1$ in terms of the Weierstrass data $(\alpha,\beta)$.

Let $f:U\to H^3$ and $g:U\to S^3_1$ be flat fronts 
constructed by the pair $(\alpha,\beta)$ of 
holomorphic functions as in \eqref{eq:fandg}.
Let $p$ be a non-degenerate singular point of $f$ or $g$,
and $\gamma(t)$ a parametrization of singular curve near $p$.
The tangent vector $\dot{\gamma}(t)=(d/dt)\gamma(t)$ to $\gamma$ 
can be expressed by 
\begin{equation}\label{eq:tangent}
\xi(t)=-i\lambda_{\overline{z}}(\gamma(t))
=
-i
(\alpha \overline{\alpha}_{\overline{z}} -
\beta\overline{\beta}_{\overline{z}})(\gamma(t))
\partial_z+
i(\alpha' \overline{\alpha} -
\beta'\overline{\beta})(\gamma(t))
\partial_{\overline{z}}
\end{equation}
(see \cite{KRSUY}) under the identification
$T_pU$ with $\R^2$ and $\C$ via the correspondence
$$\zeta=a+bi\in\C\leftrightarrow (a,b)\in\R^2\leftrightarrow 
a\partial_u+b\partial_v=
\zeta\partial_z+\overline{\zeta}\partial_{\overline{z}},$$
where $z=u+iv$. 
On the other hand, 
following \cite[p322]{KRSUY}, 
one can take null vector fields as follows:
\begin{lem}\label{lem:nullvect}
The vector field\/ $\eta_h$ gives a
null vector field of\/ $f$ and\/
$\eta_g$ gives a null vector field of\/ $g$, where
\begin{equation}\label{eq:null}
\eta_h=\dfrac{i}{\sqrt{\alpha\beta}}
=\dfrac{i}{\sqrt{\alpha\beta}}\partial_z
-\dfrac{i}{\sqrt{\overline{\alpha}\overline{\beta}}}
\partial_{\overline{z}},
\quad
\eta_d=\dfrac{1}{\sqrt{\alpha\beta}}
=\dfrac{1}{\sqrt{\alpha\beta}}\partial_z
+\dfrac{1}{\sqrt{\overline{\alpha}\overline{\beta}}}
\partial_{\overline{z}}.
\end{equation}
\end{lem}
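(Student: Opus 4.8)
The plan is to verify both statements by a short direct computation based on the derivative formulas in \eqref{eq:fgbibun}, and then to use the fact that the differential of a front has a one-dimensional kernel at a singular point.

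First I would apply $df$ and $dg$ to the real vector field corresponding to $\zeta\partial_z+\overline\zeta\partial_{\overline z}$ ($\zeta\in\C$). By \eqref{eq:fgbibun},
\begin{equation*}
df(\zeta\partial_z+\overline\zeta\partial_{\overline z})=A\bigl(\zeta D+\overline\zeta D^*\bigr)A^*,\qquad dg(\zeta\partial_z+\overline\zeta\partial_{\overline z})=A\bigl(\zeta De_3+\overline\zeta e_3D^*\bigr)A^*,
\end{equation*}
with $D^*=\pmt{0&\overline\beta\\\overline\alpha&0}$. Since $A\in SL(2,\C)$ is invertible (hence so is $A^*$), $df$ (resp.\ $dg$) annihilates such a vector at a point $p$ if and only if the inner matrix vanishes at $p$. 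Now I would substitute $\zeta=i/\sqrt{\alpha\beta}$ for $\eta_h$ and $\zeta=1/\sqrt{\alpha\beta}$ for $\eta_d$, fixing the branch by $\sqrt{\overline\alpha\overline\beta}:=\overline{\sqrt{\alpha\beta}}$, so that $\overline\zeta=-i/\sqrt{\overline\alpha\overline\beta}$ (resp.\ $\overline\zeta=1/\sqrt{\overline\alpha\overline\beta}$), in agreement with \eqref{eq:null}. A routine $2\times2$ matrix computation then shows that in both cases the inner matrix is off-diagonal with entries that are nonzero scalar multiples of
\begin{equation*}
\frac{\alpha}{\sqrt{\alpha\beta}}-\frac{\overline\beta}{\sqrt{\overline\alpha\overline\beta}}\qquad\text{and}\qquad\frac{\beta}{\sqrt{\alpha\beta}}-\frac{\overline\alpha}{\sqrt{\overline\alpha\overline\beta}}.
\end{equation*}

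The crux is to check that these two expressions vanish along $\Sigma(f)=\Sigma(g)$. By \eqref{eq:deflambda} we have $\lambda=|\alpha|^2-|\beta|^2$, so the singular set is $\{|\alpha|=|\beta|\}$. As $\alpha,\beta$ are nowhere zero, $\alpha\beta$ has a local holomorphic square root, and from $(\alpha/\sqrt{\alpha\beta})(\beta/\sqrt{\alpha\beta})=1$ together with $\bigl|\beta/\sqrt{\alpha\beta}\bigr|^2=|\beta|/|\alpha|$ I would deduce that on $\{|\alpha|=|\beta|\}$ the number $\beta/\sqrt{\alpha\beta}$ has modulus one, whence
\begin{equation*}
\frac{\alpha}{\sqrt{\alpha\beta}}=\Bigl(\frac{\beta}{\sqrt{\alpha\beta}}\Bigr)^{-1}=\overline{\frac{\beta}{\sqrt{\alpha\beta}}}=\frac{\overline\beta}{\sqrt{\overline\alpha\overline\beta}}.
\end{equation*}
This makes the first expression vanish on the singular set, and the second vanishes by the same computation with the roles of $\alpha$ and $\beta$ exchanged.

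Finally I would assemble the conclusion. The vectors $\eta_h|_p$ and $\eta_d|_p$ are nonzero (again because $\alpha,\beta$ never vanish), and by the steps above they lie in $\ker df_p$ and $\ker dg_p$, respectively, for every $p\in\Sigma(f)=\Sigma(g)$. Since $f$ and $g$ are fronts, $\rank df_p=\rank dg_p=1$ at such $p$, so these kernels are one-dimensional and therefore equal to $\spann{\eta_h|_p}$ and $\spann{\eta_d|_p}$; that is, $\eta_h$ and $\eta_d$ are null vector fields of $f$ and $g$. I do not expect a real obstacle here: the only delicate point is the bookkeeping of the square-root branches, which is disposed of once and for all by the choice $\sqrt{\overline\alpha\overline\beta}=\overline{\sqrt{\alpha\beta}}$ and the modulus-one observation above; everything else is elementary $2\times2$ matrix algebra.
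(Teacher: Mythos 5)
Your proposal is correct and follows essentially the same route as the paper: compute $df(\eta_h)$ and $dg(\eta_d)$ as $A(\cdot)A^*$ with an off-diagonal inner matrix, and show the entries vanish on $\Sigma(f)=\Sigma(g)=\{|\alpha|=|\beta|\}$ (the paper does this computation explicitly only for $\eta_d$, citing \cite[p.~322]{KRSUY} for $\eta_h$, and leaves the modulus-one/branch argument implicit). Your extra observation that the kernel is one-dimensional, hence spanned by these nonzero vectors, is consistent with the paper's standing setup for null vector fields.
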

\begin{proof}
See \cite[p322]{KRSUY} for $\eta_h$.
The directional derivative $\eta_dg$ of $g$ in the direction of $\eta_d$ can be calculated as 
\begin{equation*}
\eta_d g=A
\begin{pmatrix} 0 
& -\dfrac{\alpha}{\sqrt{\alpha\beta}}
+\dfrac{\overline{\beta}}{\sqrt{\overline{\alpha}\overline{\beta}}} \\
\dfrac{\beta}{\sqrt{\alpha\beta}}
-\dfrac{\overline{\alpha}}{\sqrt{\overline{\alpha}\overline{\beta}}} 
& 0 \end{pmatrix}
A^\ast.
\end{equation*}
Since $\alpha\overline{\alpha}-\beta\overline{\beta}=0$ on $\Sigma(g)$, 
it follows that $\eta_dg=0$ on $\Sigma(g)$.
\end{proof}
We set
\begin{align}
C_h&=
\re
\left(\dfrac{i\lambda'}{\sqrt{\alpha\beta}}\right)
=\dfrac{\re
\left(i\sqrt{\alpha\beta}\lambda_{\overline{z}}\right)}
{|\alpha|^2}\nonumber\\
&\hspace{15mm}
=-\im
\left(\dfrac{\lambda'}{\sqrt{\alpha\beta}}
\right)
=\im
\left(\dfrac{\lambda_{\overline{z}}}{\sqrt{\overline{\alpha\beta}}}
\right)
=\dfrac{\im\left(\sqrt{\alpha\beta}\lambda_{\overline{z}}\right)}
{|\alpha|^2},\label{eq:ch}\\
C_d&
=
\re
\left(\dfrac{\lambda'}{\sqrt{\alpha\beta}}\right)
=
\dfrac{\re
\left(\sqrt{\alpha\beta}\lambda_{\overline{z}}\right)}{|\alpha|^2}
\nonumber\\
&\hspace{15mm}
=\im
\left(\dfrac{i\lambda'}{\sqrt{\alpha\beta}}\right)
=\im
\left(\dfrac{i\lambda_{\overline{z}}}
{\sqrt{\overline{\alpha}\overline{\beta}}}
\right)
=\dfrac{\im\left(i\sqrt{\alpha\beta}\lambda_{\overline{z}}\right)}
{|\alpha|^2}.\label{eq:cd}
\end{align}
Then we have the following proposition.
\begin{prop}\label{prop:criteria}
{\rm (I)}{\rm (\cite[Theorem 1.1]{KRSUY})}
Let $f:U\to H^3$ be a flat front constructed 
by the data $(\alpha,\beta)$ as in \eqref{eq:fandg}. 
Let $p$ be a non-degenerate singular point of $f$. 
Then 
\begin{enumerate}
\item\label{crit:ce} $f$ at $p$ is a cuspidal edge if and only if 
$C_h\neq0$ at $p$.
\item\label{crit:sw} $f$ at $p$ is a swallowtail if and only if 
$C_h=0$ and 
\begin{equation}\label{eq:swcond}
\re\left(\dfrac{S(\alpha)-S(\beta)}{\alpha\beta}\right)\neq0
\end{equation}
at $p$, where  $S(\alpha)$ is the Schwarzian derivative of the 
primitive function of $\alpha$ with respect to $z:$
$$S(\alpha)=\left(\dfrac{\alpha'}{\alpha}\right)'-\dfrac{1}{2}\left(\dfrac{\alpha'}{\alpha}\right)^2.$$
\end{enumerate}
{\rm (II)}
Let $g:U\to S^3_1$ be a flat front constructed by the data $(\alpha,\beta)$ as in \eqref{eq:fandg}. 
Let $p$ be a non-degenerate singular point of $g$. 
Then 
\begin{enumerate}
\item\label{crit:ced} $g$ at $p$ is a cuspidal edge if and only if 
$C_d\neq0$ at $p$.
\item\label{crit:swd} $g$ at $p$ is a swallowtail if and only if 
$C_d=0$ and \eqref{eq:swcond} at $p$.
\end{enumerate}
\end{prop}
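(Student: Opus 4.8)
Part~(I) is exactly \cite[Theorem 1.1]{KRSUY}, so only (II) requires an argument, and the plan is to apply the criteria recalled above (see \cite[Proposition 1.3]{KRSUY}): if $p$ is a non-degenerate singular point of a front $k$, $\gamma$ parametrizes the singular curve with $\gamma(0)=p$, $\eta(t)$ is a null vector field along $\gamma$, and $\delta(t)=\det(\gamma'(t),\eta(t))$, then $k$ at $p$ is a cuspidal edge if and only if $\delta(0)\ne 0$, and a swallowtail if and only if $\delta(0)=0$ and $\delta'(0)\ne 0$. For $g$ we take $\lambda=\alpha\overline{\alpha}-\beta\overline{\beta}$ as identifier of singularity, $\gamma'(t)=\xi(t)$ from \eqref{eq:tangent}, and the null vector field $\eta=\eta_d$ from Lemma~\ref{lem:nullvect}.

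Identifying a vector field $c\,\partial_z+\overline{c}\,\partial_{\overline z}$ on $U$ with the complex number $c$ (the identification used around \eqref{eq:tangent}), one has $\det\big(c_1\,\partial_z+\cdots,\ c_2\,\partial_z+\cdots\big)=\im(\overline{c_1}c_2)$. Since $\xi$ corresponds to $-i\overline{\lambda'}$ and $\eta_d$ to $1/\sqrt{\alpha\beta}$,
\[
\delta_d(t):=\det(\xi,\eta_d)\big|_{\gamma(t)}
=\im\Big(\frac{i\lambda'}{\sqrt{\alpha\beta}}\Big)\Big|_{\gamma(t)}=C_d(\gamma(t))
\]
by \eqref{eq:cd}; hence $\delta_d(0)=C_d(p)$, which at once gives the cuspidal edge statement in (II).

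For the swallowtail statement, $\delta_d(0)=0$ means $C_d(p)=0$, and since $\delta_d(t)=C_d(\gamma(t))$ we get $\delta_d'(0)=\tfrac{d}{dt}\big|_{t=0}C_d(\gamma(t))=(\xi C_d)(p)$ with $\xi=-i\lambda_{\overline z}\partial_z+i\lambda_z\partial_{\overline z}$. To evaluate this I would introduce the logarithmic derivatives $a=\alpha'/\alpha$ and $b=\beta'/\beta$ (holomorphic, since $\alpha$ and $\beta$ never vanish), write
\[
\lambda_z=a|\alpha|^2-b|\beta|^2,\qquad
\lambda_{zz}=(a'+a^2)|\alpha|^2-(b'+b^2)|\beta|^2,\qquad
\lambda_{z\overline z}=|a|^2|\alpha|^2-|b|^2|\beta|^2,
\]
and substitute, using the two relations that hold at $p$: $\lambda=0$, so $|\alpha|=|\beta|$; and $C_d=0$, so $(a-b)/\sqrt{\alpha\beta}$ is purely imaginary. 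Rewriting $a'+a^2=S(\alpha)+\tfrac32a^2$ (and similarly for $\beta$) and simplifying, the contributions not proportional to $\re\big((S(\alpha)-S(\beta))/(\alpha\beta)\big)$ should cancel, leaving
\[
\delta_d'(0)=c\,\re\Big(\frac{S(\alpha)-S(\beta)}{\alpha\beta}\Big)(p),\qquad c\ne 0,
\]
where $c\ne 0$ because non-degeneracy of $p$ forces $\lambda_z(p)\ne 0$ (as $\lambda$ is real-valued, $d\lambda(p)\ne 0$ is equivalent to $\lambda_z(p)\ne 0$), i.e. $a(p)\ne b(p)$. This gives the swallowtail statement in (II). Alternatively, (II) can be deduced from \cite[Proposition 6]{fnssy} after rewriting their criterion in terms of $(\alpha,\beta)$.

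The only step that is not bookkeeping is this last simplification: one must carry along the several terms of $\xi C_d$ and check that, modulo $\lambda=0$ and $C_d=0$ at $p$, they collapse to a nonzero multiple of $\re\big((S(\alpha)-S(\beta))/(\alpha\beta)\big)$; this parallels the analogous computation in \cite{KRSUY}. The symmetry $C_h=-\im(\lambda'/\sqrt{\alpha\beta})$, $C_d=\re(\lambda'/\sqrt{\alpha\beta})$ also makes transparent why the same Schwarzian expression \eqref{eq:swcond} governs the swallowtail condition for both $f$ and $g$.
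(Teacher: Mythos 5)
Your strategy coincides with the paper's: both parts of (II) are reduced to the sign of $\delta_d(t)=\det(\xi(t),\eta_d(t))$ via \cite[Proposition 1.3]{KRSUY}, with $\xi$ from \eqref{eq:tangent} and $\eta_d$ from Lemma \ref{lem:nullvect}. Your treatment of the cuspidal-edge criterion is complete and correct: your $\delta_d=\im(\overline{(-i\lambda_{\overline z})}/\sqrt{\alpha\beta})=C_d$ agrees with the paper's \eqref{eq:delta1} up to the nonvanishing factor $-2i|\alpha|^2$ coming from a different normalization of the determinant, which is harmless. (You should also note, as the paper does, that $g$ is in fact a front, since $\eta_h g$ and $\eta_d g$ never vanish simultaneously; otherwise Proposition 1.3 of \cite{KRSUY} does not apply.)

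The swallowtail part, however, has a genuine gap: the entire content of (II)(2) is the identity $\delta_d'(0)=c\,\re\big((S(\alpha)-S(\beta))/(\alpha\beta)\big)(p)$ with $c\neq 0$, and you assert rather than verify that the remaining terms of $\xi C_d$ "should cancel." Since $C_d=\re(\lambda'/\sqrt{\alpha\beta})$ and $\lambda'=\alpha'\overline\alpha-\beta'\overline\beta$ is not holomorphic, differentiating $C_d$ directly produces the mixed term $\lambda_{z\overline z}=|a|^2|\alpha|^2-|b|^2|\beta|^2$ together with several chain-rule contributions, and it is not a priori clear that these collapse onto the Schwarzian expression; that is exactly what must be proved. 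The paper sidesteps the bookkeeping entirely: on $\Sigma(g)$ one has $\lambda'=|\alpha|^2(\alpha'/\alpha-\beta'/\beta)$, so $\delta_d$ is a nonvanishing real multiple of $\tilde\delta_d=\re\big(\tfrac{1}{\sqrt{\alpha\beta}}(\tfrac{\alpha'}{\alpha}-\tfrac{\beta'}{\beta})\big)$, whose argument is \emph{holomorphic} with $z$-derivative exactly $\tfrac{1}{\sqrt{\alpha\beta}}(S(\alpha)-S(\beta))$; hence
\[
\frac{d}{dt}\tilde\delta_d=\re\left(\frac{\dot\gamma}{\sqrt{\alpha\beta}}\big(S(\alpha)-S(\beta)\big)\right)
\]
with no terms left to cancel, and the conditions $\lambda=0$, $\tilde\delta_d(0)=0$ together with \eqref{eq:tangent} show that $\dot\gamma/\sqrt{\alpha\beta}$ is, at $p$, a nonzero \emph{real} multiple of $1/(\alpha\beta)$ times $i|\alpha|^4(\alpha'/\alpha-\beta'/\beta)/\sqrt{\alpha\beta}$, whence the claimed formula with $c\neq0$ guaranteed by non-degeneracy. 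You should either carry out your substitution to the end or adopt this reduction to a holomorphic quantity, which is where the Schwarzian derivative actually enters.
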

\begin{proof}
See \cite[Theorem 1.1]{KRSUY} for the proof of (I).
Since 
$$\{z\,|\,(\eta_h f,\eta_d f)=(0,0)\}
=\{z\,|\,(\eta_h g,\eta_d g)=(0,0)\}=\emptyset,$$
and $f$ and $g$ are dual each other, 
$f$ and $g$ are fronts.
By \cite[Proposition  1.3]{KRSUY},
it is enough to show that the condition (II), \ref{crit:ced} 
in the proposition is equivalent to $\delta_d(0)\neq0$,
where
$
\delta_d(t)=\det(\xi(t),\eta_d(t))
$.
By
\begin{align}\label{eq:delta1}
\begin{aligned}
\delta_d(t)&=-i\left(\dfrac{\lambda_{\overline{z}}}
{\sqrt{\overline{\alpha}\overline{\beta}}}+
\dfrac{\lambda'}{\sqrt{\alpha\beta}}\right)(\gamma(t))
=-2i\re\left(\dfrac{\lambda_{\overline{z}}}
{\sqrt{\overline{\alpha}\overline{\beta}}}\right)(\gamma(t))\\
&=\dfrac{-2i\re\big(\sqrt{\alpha\beta}
\lambda_{\overline{z}}\big)(\gamma(t))}{|\alpha(\gamma(t))|^2}
\end{aligned}
\end{align}
and the relation 
$\overline{\alpha}\overline{\beta}=|\alpha|^4/(\alpha\beta)$ holds 
along $\gamma$,
the first assertion holds.

Next we show the second assertion of (II). 
We note that 
$$
\lambda'=|\alpha|^2\left(\frac{\alpha'}{\alpha}
-\frac{\beta'}{\beta}\right)
$$ on $\gamma$. 
Thus by \eqref{eq:delta1}, $\delta_d$ is proportional to 
$$\tilde{\delta}_d
=\re\left(\frac{1}{\sqrt{\alpha\beta}}
\left(\frac{\alpha'}{\alpha}-\frac{\beta'}{\beta}\right)\right).$$ 
We assume that $\delta_d(0)=0$, namely, $\tilde{\delta}_d(0)=0$. 
Then since
\begin{align*}
&\dfrac{d}{dt}
\Bigg(\bigg(\frac{1}{\sqrt{\alpha\beta}}
\left(\frac{\alpha'}{\alpha}-\frac{\beta'}{\beta}\right)\bigg)(\gamma(t))
\Bigg)\\
&=\Bigg\{\left(
\left(\dfrac{\alpha'}{\alpha}\right)'
-\left(\dfrac{\beta'}{\beta}\right)'
\right)
\dfrac{1}{\sqrt{\alpha\beta}}
-\dfrac{1}{2}\left(\dfrac{\alpha'}{\alpha}
-\dfrac{\beta'}{\beta}\right)
\left(\dfrac{\alpha'}{\alpha}+\dfrac{\beta'}{\beta}\right)\dfrac{1}
{\sqrt{\alpha\beta}}\Bigg\}\dot{\gamma}\\
&=\dfrac{\dot{\gamma}}{\sqrt{\alpha\beta}}(S(\alpha)-S(\beta)).
\end{align*}
Thus we have 
$$
\dfrac{d}{dt}\Big(\tilde{\delta}_d(\gamma(t))\Big)
=\dfrac{1}{2}\left(\dfrac{\dot{\gamma}}{\sqrt{\alpha\beta}}
(S(\alpha)-S(\beta))+
\overline{\left(\dfrac{\dot{\gamma}}{\sqrt{\alpha\beta}}(S(\alpha)-S(\beta))\right)}\right).
$$
Here it holds that 
\begin{align*}
\dfrac{\dot{\gamma}}{\sqrt{\alpha\beta}}
&=\dfrac{-i|\alpha|^2}{\sqrt{\alpha\beta}}\overline{\left(\dfrac{\alpha'}{\alpha}-\dfrac{\beta'}{\beta}\right)}
=i\dfrac{|\alpha|^2\sqrt{\overline{\alpha}\overline{\beta}}}
{\alpha\beta}\left(\dfrac{\alpha'}{\alpha}-\dfrac{\beta'}{\beta}\right),\\ 
\overline{\left(\dfrac{\dot{\gamma}}{\sqrt{\alpha\beta}}\right)}
&=\dfrac{i|\alpha|^2}{\sqrt{\overline{\alpha}\overline{\beta}}}\left(\dfrac{\alpha'}{\alpha}-\dfrac{\beta'}{\beta}\right)
\end{align*}
at $p=\gamma(0)$ because \eqref{eq:tangent} and $\tilde{\delta}_d(0)=0$. 
Hence we have 
\begin{align*}
\left.\dfrac{d}{dt}\tilde{\delta}_d\right|_{t=0}
&=\dfrac{1}{2}\left(\dfrac{\dot{\gamma}}{\sqrt{\alpha\beta}}(S(\alpha)-S(\beta))+
\overline{\left(\dfrac{\dot{\gamma}}{\sqrt{\alpha\beta}}(S(\alpha)-S(\beta))\right)}\right)(p)\\
&=i|\alpha|^2\sqrt{\overline{\alpha}\overline{\beta}}\left(\dfrac{\alpha'}{\alpha}-\dfrac{\beta'}{\beta}\right)
\left(\dfrac{S(\alpha)-S(\beta)}{\alpha\beta}
+\overline{\left(\dfrac{S(\alpha)-S(\beta)}{\alpha\beta}\right)}\right)(p)\\
&=\dfrac{i|\alpha|^4}{\sqrt{{\alpha}{\beta}}}\left(\dfrac{\alpha'}{\alpha}-\dfrac{\beta'}{\beta}\right)
\re\left(\dfrac{S(\alpha)-S(\beta)}{\alpha\beta}\right)(p).
\end{align*}
We note that $(d/dt)\delta_d\Big|_{t=0}\neq0$ is equivalent to 
$(d/dt)\tilde{\delta}_d\Big|_{t=0}\neq0$
under the condition $\delta_d(0)=0$. 
Thus by \cite[Proposition 1.3]{KRSUY} again, we have the conclusion.
\end{proof}
\begin{rem}
Assuming that $u(z)$ 
is a solution of the differential equation
\begin{equation}\label{eq:slform}
u''(z)-\alpha(z)u(z)=0,
\end{equation}
satisfying $\det A=1$, where $A=(u(z),u'(z))$.
then the matrix $A$
satisfies \eqref{eq:aprime}.
This equation \eqref{eq:slform} is called the {\it SL-form of
the hypergeometric equation}.
The flat fronts $AA^*$ and $Ae_3A^*$ 
in $H^3$ and $S^3_1$ constructed from the data $(\alpha,1)$
can be regarded generalizations of the Schwarz map 
of the hypergeometric equation.
They are called {\it hyperbolic} and 
{\it de Sitter Schwarz map\/} (\cite{syy1}, see also \cite{fnssy,syy2}).
Thus setting $\beta=1$, then all invariants given 
in Section \ref{sec:ffhs} can be regarded as invariants 
of these Schwarz map.
\end{rem}

\subsection{Geometric invariants of cuspidal edges}
For flat fronts $f:U\to H^3$ and $g:U\to S^3_1$ given by \eqref{eq:fandg}.
Let $p\in U$ be a singular point, and let $f$ at $p$ 
be a cuspidal edge.
Let $\gamma(t)$ be a parametrization of $\Sigma(f)$ or $\Sigma(g)$.
Let $\kappa_s^h$, $\kappa_t^h$ and $\kappa_c^h$ 
(respectively, $\kappa_s^d$, $\kappa_t^d$ and $\kappa_c^d$)
be the singular curvature, cuspidal torsion and the cuspidal curvature
of $f$ on $\Sigma(f)$ (respectively, of $g$ on $\Sigma(g)$).
Then we have the following theorem.
\begin{thm}\label{prop:curvatures}
It holds that
\begin{align}
\kappa_s^h(t)
=&
-\dfrac{|\lambda'|^2}
{4|\alpha|^2\left|\im\left(\sqrt{\alpha\beta}
\lambda_{\overline{z}}\right)\right|}\Bigg|_{(u,v)=\gamma(t)}
&=&
-\dfrac{|\lambda'|^2}
{4|\alpha|^4C_h}\Bigg|_{(u,v)=\gamma(t)}
\label{eq:kappash}\\
\kappa_t^h(t)
=&
\dfrac{
\re\big(\sqrt{\alpha\beta}\lambda_{\overline{z}}\big)}
{\im\big(\sqrt{\alpha\beta}\lambda_{{\overline{z}}}\big)}
\Bigg|_{(u,v)=\gamma(t)}
&=&
\dfrac{C_d}{C_h}
\Bigg|_{(u,v)=\gamma(t)}
\label{eq:kappath}\\
\kappa_c^h(t)
=&
\dfrac{4|\alpha|^2
\im\big(\sqrt{\alpha\beta}\lambda_{{\overline{z}}}\big)}
{|\im\big(\sqrt{\alpha\beta}\lambda_{{\overline{z}}}\big)|^{3/2}}
\Bigg|_{(u,v)=\gamma(t)}
&=&
\dfrac{4|\alpha|C_h}
{|C_h|^{3/2}}
\Bigg|_{(u,v)=\gamma(t)}
\label{eq:kappach}\end{align}
and
\begin{align}
\kappa_s^d(t)
=&
-\dfrac{|\lambda'|^2}
{4|\alpha|^2\left|\re\left(\sqrt{\alpha\beta}\lambda_{\overline{z}}\right)\right|}\Bigg|_{(u,v)=\gamma(t)}
&=&
-\dfrac{|\lambda'|^2}{4|\alpha|^4C_d}\Bigg|_{(u,v)=\gamma(t)}
\label{eq:kappasd}\\
\kappa_t^d(t)
=&-
\dfrac{
\im\big(\sqrt{\alpha\beta}\lambda_{{\overline{z}}}\big)}
{\re\big(\sqrt{\alpha\beta}\lambda_{{\overline{z}}}\big)}
\Bigg|_{(u,v)=\gamma(t)}
&=&
-\dfrac{C_h}{C_d}\Bigg|_{(u,v)=\gamma(t)}
\label{eq:kappatd}\\
\kappa_c^d(t)
=&
-\dfrac{4|\alpha|^2
\re\big(\sqrt{\alpha\beta}\lambda_{{\overline{z}}}\big)}
{|\re\big(\sqrt{\alpha\beta}\lambda_{{\overline{z}}}\big)|^{3/2}}
\Bigg|_{(u,v)=\gamma(t)}
&=&
-\dfrac{4|\alpha|C_d}{|C_d|^{3/2}}
\Bigg|_{(u,v)=\gamma(t)}
\label{eq:kappacd}.
\end{align}
In particular, both $\kappa_s^h$ and $\kappa_s^d$ are strictly negative if $f$ and $g$ have only cuspidal edges.
\end{thm}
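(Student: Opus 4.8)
The plan is to compute each of the four invariants $\kappa_s$, $\kappa_t$, $\kappa_c$ directly from the defining formulas \eqref{eq:ksknktkikc}, using the explicit matrix data. The key is that everything in \eqref{eq:fgbibun} is expressed through conjugation by $A$, and the metric $\inner{X}{Y}=-\tfrac12\trace(Xe_2Ye_2)$ together with the exterior product $X\times Y=\tfrac{i}{2}(Xp^{-1}Y-Yp^{-1}X)$ are conjugation-equivariant; hence all inner products, cross products and volume forms reduce to traces of products of the constant matrices $e_0,e_1,e_2,e_3$ and the off-diagonal matrices $D$, $D^*$ and their $z$-derivatives. So the first step is to set up, for the case $M=H^3$, an adapted pair $(\xi,\eta)$ with $\xi$ the singular tangent from \eqref{eq:tangent} and $\eta=\eta_h$ from Lemma \ref{lem:nullvect}, then write $\xi f$, $\nabla_\xi(\xi f)$, $\nabla_{\eta_h}(\eta_h f)$ and the higher covariant derivatives in the form $A(\,\cdot\,)A^*$, where the $(\,\cdot\,)$ entries are explicit $2\times 2$ matrices built from $\alpha,\beta$, their derivatives, and $\lambda$. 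Recalling from Section \ref{sec:ffhs} that $f'=ADA^*$ implies $\xi f = A(\xi_1 D + \xi_2 D^*)A^*$ where $\xi_1=-i\lambda_{\bar z}$, $\xi_2=i\overline{\lambda_{\bar z}}$ along $\gamma$ — and that $|\xi f|^2 = -\det(\xi_1 D+\xi_2 D^*)$ is a clean expression — gives the denominators $|\xi f|^{3/2}$, $|\xi f\times\nabla_{\eta}(\eta f)|$ etc. as powers of $|\alpha|$ and $\im(\sqrt{\alpha\beta}\,\lambda_{\bar z})$.

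Second, I would carry out the covariant-derivative bookkeeping. The Levi-Civita connection on $H^3$ is $\nabla_\zeta k = \zeta k + \inner{\zeta k}{f}f$, so $\nabla_\xi(\xi f)$ differs from the ambient second derivative only by a multiple of $f=AA^*$; since $f$ is orthogonal to the tangent objects entering the volume forms $\Omega(\xi f,\cdot,g)$ and to $g=Ae_3A^*$, those correction terms drop out of most numerators, and where they survive (e.g.\ in $\kappa_t^h$) they must be tracked carefully. The $\eta_h$-derivatives require differentiating $1/\sqrt{\alpha\beta}$, which is holomorphic, so $\nabla_{\eta_h}(\eta_h f)$ and $\nabla_{\eta_h}\nabla_{\eta_h}(\eta_h f)$ stay polynomial in $\alpha,\beta,\alpha',\beta'$ divided by powers of $\sqrt{\alpha\beta}$. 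Plugging these into \eqref{eq:ksknktkikc} and simplifying the trace expressions — repeatedly using $\det A=1$, $e_2^2=-e_0$, $e_3e_2e_3=-e_2$, and $\alpha\bar\alpha=\beta\bar\beta$ on $\gamma$ so that $\overline{\alpha\beta}=|\alpha|^4/(\alpha\beta)$ — yields \eqref{eq:kappash}, \eqref{eq:kappath}, \eqref{eq:kappach}; the equalities with $C_h$, $C_d$ on the right follow from the identities \eqref{eq:ch}, \eqref{eq:cd}. The de Sitter case is parallel: replace $f=AA^*$ by $g=Ae_3A^*$, use $\eta_d$ in place of $\eta_h$, note from Lemma \ref{lem:nullvect} that $\eta_d g=0$ on $\Sigma(g)$, and use the other connection $\nabla_\zeta k = \zeta k - \inner{\zeta k}{g}g$; the only bookkeeping change is an extra factor of $i$ coming from $\eta_d = \eta_h/i$ along with the insertion of $e_3$, which accounts for the systematic swap $\re\leftrightarrow\im$ and the sign changes in \eqref{eq:kappasd}--\eqref{eq:kappacd}.

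Third, for the final ``in particular'' assertion: if $f$ has only cuspidal edges, then by Proposition \ref{prop:criteria} we have $C_h\ne 0$ everywhere on $\Sigma(f)$, and the formula \eqref{eq:kappash} reads $\kappa_s^h = -|\lambda'|^2/(4|\alpha|^4 C_h)$. On $\Sigma(f)$ one has $\lambda'=|\alpha|^2(\alpha'/\alpha-\beta'/\beta)$, and if this vanished at a point then by the computation of $\delta_d'$ in the proof of Proposition \ref{prop:criteria} that point would fail to be a swallowtail too; more directly, $\lambda$ is an identifier of singularity and $p$ non-degenerate forces $d\lambda(p)\ne 0$, but along the singular curve $\lambda\equiv 0$ so the nonzero differential is transverse, i.e.\ $\lambda_{\bar z}\ne 0$, hence (using $\overline{\alpha\beta}=|\alpha|^4/(\alpha\beta)$) $\lambda'\ne 0$; thus the numerator $|\lambda'|^2$ is strictly positive. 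It remains to pin down the sign of $C_h$: the middle expression in \eqref{eq:kappash} has denominator $|\im(\sqrt{\alpha\beta}\,\lambda_{\bar z})| = |\alpha|^2|C_h| > 0$, so $\kappa_s^h = -|\lambda'|^2/(4|\alpha|^2\,|\im(\sqrt{\alpha\beta}\,\lambda_{\bar z})|) < 0$ unconditionally, and likewise $\kappa_s^d < 0$ using \eqref{eq:kappasd} with $C_d\ne0$.

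The main obstacle is the middle step: the raw expression for $\kappa_t^h$ in \eqref{eq:ksknktkikc} is a difference of two ratios of $4\times4$ determinants (written as volume forms), and verifying that the cross terms telescope down to the single ratio $\re(\sqrt{\alpha\beta}\,\lambda_{\bar z})/\im(\sqrt{\alpha\beta}\,\lambda_{\bar z})$ — with the subtracted term exactly cancelling the spurious part of the first — is the delicate algebraic heart of the argument. I expect it is cleanest to choose, at the point $p$, a specific normalization $A(p)=I$ (possible since $SL(2,\mathbb C)$ acts transitively preserving all structures), reducing every $\Omega$ and $\inner{\ }{\ }$ to an elementary $2\times2$ trace computation, and then to argue the identities hold at $p$ and hence, $p$ being arbitrary on $\Sigma(f)$, everywhere along the singular curve.
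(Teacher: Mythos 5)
Your proposal follows essentially the same route as the paper: a direct computation of all four invariants in the Hermitian-matrix model, writing every derivative of $f$ and $g$ in the form $A(\cdot)A^*$, using the adapted pair $(\xi,\eta_h)$ (resp.\ $(\xi,\eta_d)$), exploiting $\overline{\alpha}\overline{\beta}=|\alpha|^4/(\alpha\beta)$ on the singular set, and obtaining the de Sitter formulas by the $e_3$-insertion together with the factor $i$ relating $\eta_d$ to $\eta_h$; your derivation of the strict negativity of $\kappa_s^h,\kappa_s^d$ from $d\lambda(p)\neq 0$ is also sound. The one structural point you mispredict is the ``delicate cancellation'' in $\kappa_t^h$: the paper's Lemma \ref{lem:etaetaxi} shows $\inner{\nabla_{\eta_h}\eta_h f}{\xi f}=0$ on the singular set, so the second term of $\kappa_t$ in \eqref{eq:ksknktkikc} vanishes outright and only the first term needs to be computed.
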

We can observe dual relations 
between $\kappa_t^h$ and $\kappa_t^d$ and
between $\kappa_t^h$ and $C_d$ (respectively, $\kappa_t^d$ and $C_h$).
We will see this duality in Section \ref{sec:relation}.
We remark that since the Gaussian curvature is
bounded, $\kappa_\nu$ vanishes identically
for $f$ and $g$.
\begin{proof}
We first calculate the invariant
$\kappa_s$ for $f$ and $g$.
By a direct calculation, we have
\begin{align*}
\begin{aligned}
\xi f&=
iA\pmt{0&-\lambda_{\overline{z}}\alpha+\lambda'\overline{\beta}\\
-\lambda_{\overline{z}}\beta+\lambda'\overline{\alpha}&0}A^*,\\ 
\xi g&=
iA\pmt{0&\lambda_{\overline{z}}\alpha+\lambda'\overline{\beta}\\
-(\lambda_{\overline{z}}\beta+\lambda'\overline{\alpha})&0}A^*,
\end{aligned}
\end{align*}
and on the singular set, 
\begin{align}
\begin{aligned}
\label{eq:xifxigsing}
\xi f=
i\dfrac{-\lambda_{\overline{z}}\alpha
+\lambda'\overline{\beta}}{\alpha}
ADA^*=
2\overline{\beta}\sqrt{\dfrac{\beta}{\alpha}}C_hADA^*,\\
\xi g=
i\dfrac{\lambda_{\overline{z}}\beta
 +\lambda'\overline{\alpha}}{\beta}
ADe_3A^*
=
2i\sqrt{\overline{\alpha}\overline{\beta}}C_dADe_3A^*.
\end{aligned}
\end{align}
Thus on the singular set, we have
\begin{align*}
\nabla_{\xi}\xi f
&\equiv
-2i\overline{\beta}\sqrt{\dfrac{\beta}{\alpha}}C_h
A\tilde D_hA^*\mod \langle f\rangle_{\E(U)},\\
\nabla_{\xi}\xi g
&\equiv
-2i\overline{\alpha}\sqrt{\dfrac{\alpha}{\beta}}C_d
A\tilde D_dA^*\mod \langle g\rangle_{\E(U)},
\end{align*}
where
$$
\tilde D_h=
-\lambda_{\overline{z}}D^2-\lambda_{\overline{z}}D'+\lambda'DD^*,\quad
\tilde D_d=
-\lambda_{\overline{z}}D^2e_3-\lambda_{\overline{z}}D'e_3+\lambda'De_3D^*.
$$
Here $\E(U)=\{h:U\to\R\}$
is the ring consists of function-germs on $U$, and
$\langle k_1,\ldots,k_r\rangle_{\E(U)}
=\{a_1k_1+\cdots+a_rk_r\,|\,a_1,\ldots,a_r\in {\E(U)}\}$.
The formula
$f_1\equiv f_2\mod A$
stands for $f_1-f_2\in A$.
Hence, we have
$$
\xi f\times \nabla_{\xi}\xi f
\equiv
4\overline{\beta}^2\dfrac{\alpha}{\beta}C_h^2
\lambda_{\overline{z}}(\alpha\beta'-\alpha'\beta)g
\equiv
-4\overline{\beta}^2\dfrac{\alpha^2}{\beta\overline{\beta}}C_h^2
|\lambda'|^2g
\mod \langle \xi f\times f\rangle_{\E(U)}
$$
and
$$
\xi g\times \nabla_{\xi}\xi g
\equiv
-4\overline{\alpha}\overline{\beta}C_d^2
\lambda_{\overline{z}}(\alpha\beta'-\alpha'\beta)f
\equiv
4\alpha\overline{\alpha}C_d^2
|\lambda'|^2f
\mod\langle \xi g\times g\rangle_{\E(U)}
$$
on the singular set, where we see
$
\lambda_{\overline{z}}(\alpha\beta'-\alpha'\beta)
=
-\lambda_{\overline{z}}
(\alpha/\overline{\beta})
(\alpha\alpha'-\beta\beta')
$ on the singular set.
On the other hand, the signed area density function
for $f$ and $g$ are 
$\Lambda_h=\Omega(\xi f,\eta_h f,g)$ and
$\Lambda_d=\Omega(\xi g,\eta_d g,f)$, respectively.
Then we see that 
$\eta_h\Lambda_h=-|\lambda_{\eta_h}|^2$, and
$\eta_d\Lambda_d=|\lambda_{\eta_d}|^2$.
Thus we have \eqref{eq:kappash} and \eqref{eq:kappasd}.

Now we proceed to calculate the cuspidal torsion and
the cuspidal curvature.
To show this, we prove several formulas of differentials
of $f$ and $g$ needed later.
\begin{lem}\label{lem:diff100}
On the singular set, it holds that
\begin{equation}\label{eq:onsingzero}
f'\times f_{\overline{z}}=f'\times f'_{\overline{z}}
=f_{\overline{z}}\times f'_{\overline{z}}=
g'\times g_{\overline{z}}=g'\times g'_{\overline{z}}
=g_{\overline{z}}\times g'_{\overline{z}}=0
\end{equation}
and
\begin{align}
\begin{aligned}\label{eq:fzfzz}
f'\times f''&=\dfrac{i}{2}(\alpha\beta'-\alpha'\beta)g,\quad 
f'\times f_{\overline{z}\overline{z}}
=\dfrac{i}{2}\lambda_{\overline{z}}g,\\
f_{\overline{z}}\times f''&=-\dfrac{i}{2}\lambda'g,\quad
f_{\overline{z}}\times f_{\overline{z}\overline{z}}
=\dfrac{i}{2}\overline{(\alpha'\beta-\alpha\beta')}g,
\end{aligned}\\
\begin{aligned}\label{eq:gzgzz}
g'\times g''&=\dfrac{i}{2}(\alpha\beta'-\alpha'\beta)f,\quad 
g'\times g_{\overline{z}\overline{z}}
=-\dfrac{i}{2}\lambda_{\overline{z}}f,\\
g_{\overline{z}}\times g''&=\dfrac{i}{2}\lambda'f,\quad
g_{\overline{z}}\times g_{\overline{z}\overline{z}}
=\dfrac{i}{2}\overline{(\alpha'\beta-\alpha\beta')}f.
\end{aligned}
\end{align}
In particular, all vectors in \eqref{eq:fzfzz} are
parallel to $g$, and that in \eqref{eq:gzgzz} are
parallel to $f$.
\end{lem}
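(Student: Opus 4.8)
The plan is to reduce every exterior product in the statement to a commutator of $2\times2$ matrices, using the formula $X\times Y=\frac{i}{2}(Xp^{-1}Y-Yp^{-1}X)$ together with $f(z)=AA^*$, $g(z)=Ae_3A^*$ and $A'=AD$, $D=\pmt{0&\alpha\\ \beta&0}$. I first record the differentiation rules coming from \eqref{eq:aprime} and the holomorphicity of $\alpha,\beta$: the matrix $A$ is holomorphic and $A^*$ antiholomorphic in $z$, so $A'=AD$, $A_{\overline z}=0$, $(A^*)'=0$, $(A^*)_{\overline z}=(A')^*=D^*A^*$, and likewise $D_{\overline z}=0$, $(D^*)'=0$, $(D^*)_{\overline z}=(D')^*$. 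Differentiating \eqref{eq:fgbibun} once more gives
\[
f''=A(D^2+D')A^*,\quad f'_{\overline z}=ADD^*A^*,\quad f_{\overline z\overline z}=A\big((D^*)^2+(D')^*\big)A^*,
\]
and, inserting $e_3$, $g''=A(D^2+D')e_3A^*$, $g'_{\overline z}=ADe_3D^*A^*$, $g_{\overline z\overline z}=Ae_3\big((D^*)^2+(D')^*\big)A^*$. Using $e_3D=-De_3$ (and the same for $D^*,D',(D')^*$) to push the middle $e_3$ to the right, every vector at $g(z)$ then has the form $AVe_3A^*$ with $V$ a word in $D,D^*,D',(D')^*$; for instance $g_{\overline z}=-AD^*e_3A^*$, $g'_{\overline z}=-ADD^*e_3A^*$, $g_{\overline z\overline z}=A\big((D^*)^2-(D')^*\big)e_3A^*$. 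Similarly every vector at $f(z)$ has the form $AUA^*$ with $U$ a word in $D,D^*,D',(D')^*$.

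Next I invoke $(AA^*)^{-1}=(A^*)^{-1}A^{-1}$ and $(Ae_3A^*)^{-1}=(A^*)^{-1}e_3A^{-1}$ (since $e_3^2=e_0$). For $p=f(z)$ and $X=AU_XA^*$, $Y=AU_YA^*$ one gets $Xp^{-1}=AU_XA^{-1}$, hence
\[
X\times Y=\frac{i}{2}A[U_X,U_Y]A^*;
\]
for $p=g(z)$ and $X=AV_Xe_3A^*$, $Y=AV_Ye_3A^*$ the two central $e_3$'s cancel, $Xp^{-1}=AV_XA^{-1}$, and $X\times Y=\frac{i}{2}A[V_X,V_Y]e_3A^*$. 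So the lemma becomes a matter of evaluating a few commutators of $D$.

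Finally I compute those commutators. The structural facts doing the work are that $D^2=\alpha\beta\,e_0$ and $(D^*)^2=\overline{\alpha\beta}\,e_0$ are scalar (hence central, dropping out of every commutator), together with
\[
[D,D^*]=\lambda e_3,\ \ [D,D']=(\alpha\beta'-\alpha'\beta)e_3,\ \ [D^*,D']=-\lambda'e_3,\ \ [D,(D')^*]=\lambda_{\overline z}e_3,\ \ [D^*,(D')^*]=\overline{(\alpha'\beta-\alpha\beta')}\,e_3,
\]
where $\lambda=\alpha\overline\alpha-\beta\overline\beta$. Substituting these into the two formulas above, and using $Ae_3e_3A^*=AA^*=f$ on the de Sitter side, produces exactly the values in \eqref{eq:fzfzz} and \eqref{eq:gzgzz} (indeed these hold identically, not only along $\Sigma(f)$). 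For \eqref{eq:onsingzero} note that the six products there are precisely those whose commutator is a scalar multiple of $[D,D^*]=\lambda e_3$ --- e.g.\ $f'\times f_{\overline z}=\frac{i}{2}\lambda g$, $f'\times f'_{\overline z}=\frac{i}{2}\lambda ADe_3A^*$, $f_{\overline z}\times f'_{\overline z}=-\frac{i}{2}\lambda Ae_3D^*A^*$, and similarly for $g$ --- so they vanish on $\Sigma(f)=\Sigma(g)=\{\lambda=0\}$. I anticipate no genuine obstacle here: the only points requiring care are the holomorphic/antiholomorphic bookkeeping (in particular $(A^*)'=0$, which is what keeps $f''$ free of a $DD^*$-term and the formulas clean) and the signs produced by commuting $e_3$ past the anti-diagonal matrices in the de Sitter computation.
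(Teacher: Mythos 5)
Your proof is correct and follows essentially the same route as the paper: compute the second derivatives in the form $A(\cdot)A^*$ (your $D$-word expressions expand to exactly the matrices \eqref{eq:fbibun} and \eqref{eq:gbibun} in the paper) and then apply the exterior product formula $X\times Y=\tfrac{i}{2}(Xp^{-1}Y-Yp^{-1}X)$. The only difference is presentational: the paper stops at the matrices and leaves the cross products to the reader, whereas your reduction to commutators of $D$, $D^*$, $D'$, $(D')^*$ carries them out explicitly and makes it transparent that the six products in \eqref{eq:onsingzero} all pick up the factor $\lambda$ from $[D,D^*]=\lambda e_3$.
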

\begin{proof}
By a direct calculation, we have
\begin{equation}\label{eq:fbibun}
f''
=
A\pmt{\alpha\beta & \alpha' \\ \beta' & \alpha\beta} A^*,\ 
f'_{\overline{z}}
=
A\pmt{|\alpha|^2 & 0 \\ 0 & |\beta|^2}A^*,\ 
f_{\overline{z}\overline{z}}
=
A\pmt{\overline{\alpha}\overline{\beta} & \overline{\beta'} \\ 
\overline{\alpha'} & \overline{\alpha}\overline{\beta}} A^*
\end{equation}
and
\begin{equation}\label{eq:gbibun}
g''
=
A\pmt{\alpha\beta & -\alpha' \\ \beta' & -\alpha\beta}A^*,\,
g'_{\overline{z}}
=
A\pmt{-|\alpha|^2 & 0 \\ 0 & |\beta|^2}A^*,\,
g_{\overline{z}\overline{z}}
=
A\pmt{\overline{\alpha}\overline{\beta} & \overline{\beta'} \\
-\overline{\alpha'} & -\overline{\alpha}\overline{\beta}}A^*.\\
\end{equation}
This shows the assertion.
\end{proof}
\begin{lem}\label{lem:cross1}
On the singular set, it holds that
\begin{align*}
\xi f\times \nabla_{\eta_h}\eta_hf
&=\dfrac{2}{|\alpha|^4}\left(\im\left(\sqrt{\alpha\beta}\lambda_{\overline{z}}\right)\right)^2g,\\
\xi f\times \nabla_{\eta_d}\eta_dg
&=\dfrac{-2}{|\alpha|^4}\left(\re\left(\sqrt{\alpha\beta}\lambda_{\overline{z}}\right)\right)^2f.
\end{align*}
\end{lem}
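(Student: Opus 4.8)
The two identities are proved in the same way, and the plan is to express everything through the Weierstrass frame $A$ and to reduce the cross products to the differentiation formulas already assembled in Lemma~\ref{lem:diff100}. The first point is that inside either exterior product the covariant second derivative can be replaced by the plain iterated directional derivative: by the description of the two metric connections, $\nabla_{\eta}(\eta k)=\eta(\eta k)$ modulo $\langle f\rangle_{\E(U)}$ (resp.\ modulo $\langle g\rangle_{\E(U)}$), and from $X\times Y=\tfrac{i}{2}(Xp^{-1}Y-Yp^{-1}X)$ one gets $\xi f\times f=0$ for the product based at $f$ and $\xi g\times g=0$ for the product based at $g$; hence on the singular set $\xi f\times\nabla_{\eta_{h}}(\eta_{h}f)=\xi f\times\eta_{h}(\eta_{h}f)$ and $\xi g\times\nabla_{\eta_{d}}(\eta_{d}g)=\xi g\times\eta_{d}(\eta_{d}g)$. (Since $\inner{f}{g}=0$ we have $g\in T_{f}H^{3}$ and $f\in T_{g}S^{3}_{1}$, so the claimed right-hand sides do lie in the correct tangent spaces.)

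Next I would compute the iterated derivatives. Writing $\eta_{h}=P\partial_{z}+Q\partial_{\overline{z}}$ with $P=i/\sqrt{\alpha\beta}$ holomorphic and $Q=-i/\sqrt{\overline{\alpha}\overline{\beta}}$ anti-holomorphic (Lemma~\ref{lem:nullvect}), the Leibniz rule gives
\[
\eta_{h}(\eta_{h}f)=P^{2}f''+2PQ\,f'_{\overline{z}}+Q^{2}f_{\overline{z}\overline{z}}+PP'f'+QQ_{\overline{z}}f_{\overline{z}},
\]
and similarly for $\eta_{d}(\eta_{d}g)$ with $\tilde P=1/\sqrt{\alpha\beta}$, $\tilde Q=1/\sqrt{\overline{\alpha}\overline{\beta}}$ and $g$ in place of $f$. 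Pairing against $\xi f=-i\lambda_{\overline{z}}f'+i\lambda'f_{\overline{z}}$ (from \eqref{eq:tangent}) and using $\C$-bilinearity of $\times$, every summand involving $f'$, $f_{\overline{z}}$ or $f'_{\overline{z}}$ in the second factor dies on the singular set by \eqref{eq:onsingzero}, so only $\xi f\times(P^{2}f''+Q^{2}f_{\overline{z}\overline{z}})$ remains, and its four constituent products are read off from \eqref{eq:fzfzz} (all of them parallel to $g$). One gets
\[
\xi f\times\nabla_{\eta_{h}}(\eta_{h}f)=\tfrac12\Bigl(\lambda_{\overline{z}}P^{2}(\alpha\beta'-\alpha'\beta)+\lambda_{\overline{z}}^{2}Q^{2}+(\lambda')^{2}P^{2}+\lambda'Q^{2}\,\overline{\alpha\beta'-\alpha'\beta}\Bigr)g,
\]
together with the analogue for $g$, obtained the same way from $\xi g=-i\lambda_{\overline{z}}g'+i\lambda'g_{\overline{z}}$, \eqref{eq:onsingzero} and \eqref{eq:gzgzz}.

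It remains to simplify on $\Sigma(f)=\Sigma(g)$, where $\lambda=0$, i.e.\ $|\alpha|=|\beta|$. Then $\overline{\alpha}\overline{\beta}=|\alpha|^{4}/(\alpha\beta)$, $|\lambda'|^{2}=|\alpha|^{2}(C_{d}^{2}+C_{h}^{2})$, and differentiating $\lambda=\alpha\overline{\alpha}-\beta\overline{\beta}$ and using $\alpha\overline{\alpha}=\beta\overline{\beta}$ yields $\alpha\beta'-\alpha'\beta=-\alpha\beta\lambda'/|\alpha|^{2}$. Writing $\lambda'=\sqrt{\alpha\beta}\,(C_{d}-iC_{h})$ and $\lambda_{\overline{z}}=\overline{\lambda'}=\overline{\sqrt{\alpha\beta}}\,(C_{d}+iC_{h})$ (consistent with \eqref{eq:ch}--\eqref{eq:cd}), the four summands above become $C_{d}^{2}+C_{h}^{2}$, $-(C_{d}+iC_{h})^{2}$, $-(C_{d}-iC_{h})^{2}$, $C_{d}^{2}+C_{h}^{2}$; since $(C_{d}+iC_{h})^{2}+(C_{d}-iC_{h})^{2}=2(C_{d}^{2}-C_{h}^{2})$ their sum is $4C_{h}^{2}$, so $\xi f\times\nabla_{\eta_{h}}(\eta_{h}f)=2C_{h}^{2}g=\frac{2}{|\alpha|^{4}}\bigl(\im(\sqrt{\alpha\beta}\lambda_{\overline{z}})\bigr)^{2}g$. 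For $g$ the same steps, now with $\tilde P^{2}=1/(\alpha\beta)$, $\tilde Q^{2}=1/(\overline{\alpha}\overline{\beta})$ and the signs of \eqref{eq:gzgzz}, change the four summands to $-(C_{d}^{2}+C_{h}^{2})$, $-(C_{d}+iC_{h})^{2}$, $-(C_{d}-iC_{h})^{2}$, $-(C_{d}^{2}+C_{h}^{2})$, with sum $-4C_{d}^{2}$, giving $\xi g\times\nabla_{\eta_{d}}(\eta_{d}g)=-2C_{d}^{2}f=\frac{-2}{|\alpha|^{4}}\bigl(\re(\sqrt{\alpha\beta}\lambda_{\overline{z}})\bigr)^{2}f$.

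The only real work is the bookkeeping of this last step; the points where care is needed are to differentiate the various products \emph{before} imposing $\lambda=0$, to keep the branch of $\sqrt{\alpha\beta}$ fixed consistently with Lemma~\ref{lem:nullvect}, and to notice that it is precisely the identity $(C_{d}+iC_{h})^{2}+(C_{d}-iC_{h})^{2}=2(C_{d}^{2}-C_{h}^{2})$ which makes the $\lambda_{\overline{z}}^{2}$ and $(\lambda')^{2}$ contributions recombine with the two $|\lambda'|^{2}$ terms so as to leave a pure $C_{h}^{2}$ (resp.\ $C_{d}^{2}$), with no cross term $C_{d}C_{h}$.
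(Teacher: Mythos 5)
Your proposal is correct and follows essentially the same route as the paper: both replace $\nabla_\eta(\eta k)$ by the plain iterated derivative on the singular set (the paper by showing the normal component $\inner{\eta\eta k}{k}=-\inner{\eta k}{\eta k}$ vanishes there, you by noting $\xi f\times f=\xi g\times g=0$), expand $\eta\eta f$ and $\eta\eta g$ by the Leibniz rule, discard all terms killed by \eqref{eq:onsingzero}, and evaluate the surviving products via \eqref{eq:fzfzz}, \eqref{eq:gzgzz} together with the relation $|\alpha|^2(\alpha'\beta-\alpha\beta')=\alpha\beta\lambda'$ on $\Sigma(f)=\Sigma(g)$. Your $C_h,C_d$ bookkeeping (and your reading of the second left-hand side as $\xi g\times\nabla_{\eta_d}\eta_d g$, which is what the paper's proof and \eqref{eq:omega2} actually use) is consistent with the paper's computation.
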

\begin{proof}
By definition, we have 
$$\nabla_{\eta_h}\eta_hf=\eta_h\eta_hf+\inner{\eta_h\eta_hf}{f}f,\quad
\nabla_{\eta_d}\eta_dg=\eta_d\eta_dg-\inner{\eta_d\eta_dg}{g}g.$$
Since $\inner{\eta_hf}{f}=\inner{\eta_dg}{g}=0$, it holds that 
$\inner{\eta_h\eta_hf}{f}+\inner{\eta_hf}{\eta_hf}=0$ and 
$\inner{\eta_d\eta_dg}{g}+\inner{\eta_dg}{\eta_dg}=0$. 
Moreover, on the singular set, 
$\eta_hf=\eta_dg=0$, 
and hence $\inner{\eta_h\eta_hf}{f}=\inner{\eta_d\eta_dg}{g}=0$ on $\Sigma(f)=\Sigma(g)$. 
In particular, it holds that $\nabla_{\eta_h}\eta_hf=\eta_h\eta_hf$ and 
$\nabla_{\eta_d}\eta_dg=\eta_d\eta_dg$, and 
we have $\xi f\times\nabla_{\eta_h}\eta_hf=\xi f\times\eta_h\eta_hf$ and 
$\xi g\times\nabla_{\eta_d}\eta_dg=\xi g\times\eta_d\eta_dg$ on the singular set. 

On the singular set, it follows that  
\begin{align}
\eta_h\eta_hf&=
\dfrac{-f''}{\alpha\beta}+\dfrac{2f'_{\overline{z}}}{|\alpha|^2}
-\dfrac{f_{\overline{z}\overline{z}}}{\overline{\alpha}\overline{\beta}}
+\dfrac{1}{2\alpha\beta}\left(\dfrac{\alpha'}{\alpha}
+\dfrac{\beta'}{\beta}\right)f'
+\dfrac{1}{2\overline{\alpha}\overline{\beta}}
\overline{\left(\dfrac{\alpha'}{\alpha}
+\dfrac{\beta'}{\beta}\right)}f_{\overline{z}},\label{eq:etaetaf}
\\
\eta_d\eta_dg&=
\dfrac{g''}{\alpha\beta}+\dfrac{2g'_{\overline{z}}}{|\alpha|^2}
+\dfrac{g_{\overline{z}\overline{z}}}{\overline{\alpha}\overline{\beta}}
-\dfrac{1}{2\alpha\beta}\left(\dfrac{\alpha'}{\alpha}+\dfrac{\beta'}{\beta}
\right)g'
-\dfrac{1}{2\overline{\alpha}\overline{\beta}}
\overline{\left(\dfrac{\alpha'}{\alpha}
+\dfrac{\beta'}{\beta}\right)}g_{\overline{z}}.\label{eq:etaetag}
\end{align}
By \eqref{eq:onsingzero}, \eqref{eq:fzfzz}, \eqref{eq:gzgzz}, \eqref{eq:etaetaf}, \eqref{eq:etaetag} 
and the relation $|\alpha|^2(\alpha'\beta-\alpha\beta')=\alpha\beta\lambda'$ 
which holds on $\Sigma(f)=\Sigma(g)$, we have the assertion.
\end{proof}
By Lemma \ref{lem:cross1}, for any $X\in T_{f(p)}H^3$ and 
$Y\in T_{g(p)}S^3_1$, we have 
\begin{align}\label{eq:omega2}
\begin{aligned}
\Omega(\xi f,\nabla_{\eta_h}\eta_hf,X)
&=
\dfrac{2}{|\alpha|^4}\left(\im\left(\sqrt{\alpha\beta}
\lambda_{\overline{z}}\right)\right)^2\inner{g}{X},\\
\Omega(\xi g,\nabla_{\eta_d}\eta_dg,Y)
&=
\dfrac{-2}{|\alpha|^4}\left(\re\left(\sqrt{\alpha\beta}
\lambda_{\overline{z}}\right)\right)^2\inner{f}{Y}.
\end{aligned}
\end{align}

\begin{lem}\label{lem:etaetaxi}
On the singular set, 
$\inner{\nabla_{\eta_h}\eta_hf}{\xi f}=
\inner{\nabla_{\eta_d}\eta_dg}{\xi g}=0$ holds.
\end{lem}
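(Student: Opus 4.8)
The plan is to reduce both identities to the vanishing of a single inner product and then to evaluate that product by the matrix calculus. On $\Sigma(f)=\Sigma(g)$ we have $\eta_h f=\eta_d g=0$, so, exactly as in the proof of Lemma~\ref{lem:cross1}, $\nabla_{\eta_h}\eta_h f=\eta_h\eta_h f$ and $\nabla_{\eta_d}\eta_d g=\eta_d\eta_d g$ there. Moreover, by \eqref{eq:xifxigsing}, on the singular set $\xi f$ is a nonzero scalar multiple of $f'=ADA^{*}$ and $\xi g$ is a nonzero scalar multiple of $g'=ADe_3A^{*}$. Hence it suffices to prove that $\inner{\eta_h\eta_h f}{f'}=0$ and $\inner{\eta_d\eta_d g}{g'}=0$ on $\Sigma(f)$.

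I would then expand $\eta_h\eta_h f$ and $\eta_d\eta_d g$ by \eqref{eq:etaetaf} and \eqref{eq:etaetag}, which express them as explicit combinations of $f''$, $f'_{\overline z}$, $f_{\overline z\overline z}$, $f'$, $f_{\overline z}$, respectively of $g''$, $g'_{\overline z}$, $g_{\overline z\overline z}$, $g'$, $g_{\overline z}$. So what is needed are the pairings of $f'$ with each of these five vectors and the analogous pairings of $g'$. Using the metric $\inner{X}{Y}=-\frac12\trace(Xe_2Ye_2)$, the identity $e_2Me_2=\widetilde M$ (the cofactor matrix of $M$) and $\widetilde A=A^{-1}$ since $\det A=1$, each pairing collapses to the trace of a product of the constant matrices $D$, $D^{*}$, $e_3$, $\operatorname{diag}(|\alpha|^{2},|\beta|^{2})$, $\dots$ occurring in \eqref{eq:fbibun} and \eqref{eq:gbibun}; equivalently one can differentiate the first fundamental forms of $f$ and $g$, namely $\alpha\beta\,dz^{2}+(|\alpha|^{2}+|\beta|^{2})\,dz\,d\overline z+\overline{\alpha\beta}\,d\overline z^{2}$ and its de Sitter analogue. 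Either way one obtains $\inner{f'}{f'}=\alpha\beta$, $\inner{f'}{f_{\overline z}}=\frac12(|\alpha|^{2}+|\beta|^{2})$, $\inner{f'}{f'_{\overline z}}=0$, $\inner{f'}{f''}=\frac12(\alpha\beta)'$, $\inner{f'}{f_{\overline z\overline z}}=\frac12(\alpha\overline{\alpha'}+\beta\overline{\beta'})$, together with the corresponding formulas for $g'$, which differ only in signs.

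Finally I would substitute these into the two expansions and simplify on $\Sigma(f)$ using the relations $|\alpha|^{2}=|\beta|^{2}$, $\overline{\alpha}\,\overline{\beta}=|\alpha|^{4}/(\alpha\beta)$ and $\alpha\overline{\alpha'}+\beta\overline{\beta'}=|\alpha|^{2}\,\overline{(\alpha'/\alpha+\beta'/\beta)}$, all of which hold along $\gamma$. One then checks that the contribution of $f''$ cancels that of $f'$, the contribution of $f_{\overline z\overline z}$ cancels that of $f_{\overline z}$, and the $f'_{\overline z}$ term vanishes, so $\inner{\eta_h\eta_h f}{f'}=0$; the computation for $g$ is identical up to signs. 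The main obstacle should be this last bookkeeping step, that is, tracking the chosen branch of $\sqrt{\alpha\beta}$ and the various conjugations so that the pairwise cancellations become transparent, together with keeping the $S^3_1$ signs correct in the five pairings involving $g'$; everything else is formal.
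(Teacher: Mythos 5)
Your proposal is correct and follows essentially the same route as the paper: reduce $\nabla_{\eta}\eta$ to $\eta\eta$ on the singular set, expand via \eqref{eq:etaetaf} and \eqref{eq:etaetag}, compute the needed pairings from the matrix representations \eqref{eq:fbibun}, \eqref{eq:gbibun}, and cancel using the relations that hold along $\gamma$. Your observation that $\xi f$ and $\xi g$ are proportional to $f'$ and $g'$ on $\Sigma(f)=\Sigma(g)$ merely streamlines the bookkeeping (the paper tabulates the pairings with both $f'$ and $f_{\overline z}$), and your claimed cancellation pattern checks out.
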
 
\begin{proof}
By the above arguments, it follows that 
$\inner{\nabla_{\eta_h}\eta_hf}{\xi f}=\inner{\eta_h\eta_hf}{\xi f}$ and 
$\inner{\nabla_{\eta_d}\eta_dg}{\xi g}=\inner{\eta_d\eta_dg}{\xi g}$ 
on $\Sigma(f)=\Sigma(g)$. 
By \eqref{eq:fbibun} and \eqref{eq:gbibun}, 
\begin{align*}
\inner{f'}{f'}&=\alpha\beta,\quad
\inner{f'}{f_{\overline{z}}}=|\alpha|^2,\quad
\inner{f_{\overline{z}}}{f_{\overline{z}}}=\overline{\alpha}\overline{\beta},\\
\inner{f'}{f''}&=\dfrac{1}{2}(\alpha'\beta+\alpha\beta'),\quad
\inner{f'}{f'_{\overline{z}}}=0,\quad
\inner{f'}{f_{\overline{z}\overline{z}}}=\dfrac{1}{2}(\alpha\overline{\alpha'}+\beta\overline{\beta'}),\\
\inner{f_{\overline{z}}}{f''}&=\dfrac{1}{2}(\alpha'\overline{\alpha}+\beta'\overline{\beta}),\quad
\inner{f_{\overline{z}}}{f'_{\overline{z}}}=0,\quad
\inner{f_{\overline{z}}}{f_{\overline{z}\overline{z}}}=\dfrac{1}{2}\overline{(\alpha'\beta+\alpha\beta')},\\
\inner{g'}{g'}&=-\alpha\beta,\quad 
\inner{g'}{g_{\overline{z}}}=|\alpha|^2,\quad
\inner{g_{\overline{z}}}{g_{\overline{z}}}=-\overline{\alpha}\overline{\beta}\\
\inner{g'}{g''}&=\dfrac{-1}{2}(\alpha'\beta+\alpha\beta'),\quad
\inner{g'}{g'_{\overline{z}}}=0,\quad
\inner{g'}{g_{\overline{z}\overline{z}}}=\dfrac{1}{2}(\alpha\overline{\alpha'}+\beta\overline{\beta'}),\\
\inner{g_{\overline{z}}}{g''}&=\dfrac{1}{2}(\alpha'\overline{\alpha}+\beta'\overline{\beta}),\quad
\inner{g_{\overline{z}}}{g'_{\overline{z}}}=0,\quad
\inner{g_{\overline{z}}}{g_{\overline{z}\overline{z}}}=\dfrac{-1}{2}\overline{(\alpha'\beta+\alpha\beta')}
\end{align*}
hold on the singular set. 
Moreover, we have 
$\alpha'\overline{\alpha}+\beta'\overline{\beta}
=|\alpha|^2\left({\alpha'}/{\alpha}+{\beta'}/{\beta}\right)$,
$\alpha'\beta+\alpha\beta'=\alpha\beta
\left({\alpha'}/{\alpha}+{\beta'}/{\beta}\right)$ 
on the singular set.
Thus we have $\inner{\eta_h\eta_hf}{\xi f}=\inner{\eta_d\eta_dg}{\xi g}=0$ on the singular set. 
\end{proof}
We turn to calculate $\kappa_t^h$ and $\kappa_t^d$.
By Lemma \ref{lem:etaetaxi}, 
the second terms of $\kappa_t^h$ and $\kappa_t^d$ 
in \eqref{eq:ksknktkikc} vanish.
Thus for the calculations of them,
only the first terms of $\kappa_t^h$ and $\kappa_t^d$ 
in \eqref{eq:ksknktkikc}
are needed.
Since 
$\inner{f''}{g}=\inner{f'_{\overline{z}}}{g}
=\inner{f_{\overline{z}\overline{z}}}{g}=0$ 
and $\inner{g''}{f}=\inner{g'_{\overline{z}}}{f}=
\inner{g_{\overline{z}\overline{z}}}{f}=0$ hold on $\Sigma(f)=\Sigma(g)$,
and by \eqref{eq:etaetaf} and \eqref{eq:etaetag}, 
it holds that
\begin{align*}
\nabla_{\xi}\nabla_{\eta_h}\eta_hf&\equiv-i\lambda_{\overline{z}}
\left(\dfrac{-1}{\alpha\beta}f'''+\dfrac{2}{|\alpha|^2}f''_{\overline{z}}
-\dfrac{1}{\overline{\alpha}\overline{\beta}}f'_{\overline{z}\overline{z}}\right)\\
&+i\lambda'\left(\dfrac{-1}{\alpha\beta}f''_{\overline{z}}+\dfrac{2}{|\alpha|^2}f'_{\overline{z}\overline{z}}
-\dfrac{1}{\overline{\alpha}\overline{\beta}}f_{\overline{z}\overline{z}\overline{z}}\right)\mod\mathcal{F},\\
\nabla_{\xi}\nabla_{\eta_d}\eta_dg&\equiv-i\lambda_{\overline{z}}
\left(\dfrac{1}{\alpha\beta}g'''+\dfrac{2}{|\alpha|^2}g''_{\overline{z}}
+\dfrac{1}{\overline{\alpha}\overline{\beta}}g'_{\overline{z}\overline{z}}\right)\\
&+i\lambda'\left(\dfrac{1}{\alpha\beta}g''_{\overline{z}}+\dfrac{2}{|\alpha|^2}g'_{\overline{z}\overline{z}}
+\dfrac{1}{\overline{\alpha}\overline{\beta}}g_{\overline{z}\overline{z}\overline{z}}\right)\mod\mathcal{G},
\end{align*} 
where $\mathcal{F}=
\langle{f,f'f_{\overline{z}},f'',f'_{\overline{z}},
f_{\overline{z}\overline{z}}}\rangle_{\E(U)}$, 
$\mathcal{G}=\langle{g,g'g_{\overline{z}},g'',
g'_{\overline{z}},g_{\overline{z}\overline{z}}}\rangle_{\E(U)}$. 
We show the following lemma:
\begin{lem}\label{lem:diff200}
On the singular set, it holds that
\begin{align}\label{eq:naiseki3}
\begin{aligned}
\inner{f'''}{g}&=\dfrac{-\alpha\beta}{2|\alpha|^2}\lambda',\quad
\inner{f''_{\overline{z}}}{g}=\dfrac{\lambda'}{2},\quad
\inner{f'_{\overline{z}\overline{z}}}{g}=\dfrac{\lambda_{\overline{z}}}{2},\quad
\inner{f_{\overline{z}\overline{z}\overline{z}}}{g}
=\dfrac{-\overline{\alpha}\overline{\beta}}{2|\alpha|^2}\lambda_{\overline{z}},\\
\inner{g'''}{f}&=\dfrac{\alpha\beta}{2|\alpha|^2}\lambda',\quad
\inner{g''_{\overline{z}}}{f}=\dfrac{\lambda'}{2},\quad
\inner{g'_{\overline{z}\overline{z}}}{f}=\dfrac{\lambda_{\overline{z}}}{2},\quad
\inner{g_{\overline{z}\overline{z}\overline{z}}}{f}
=\dfrac{\overline{\alpha}\overline{\beta}}{2|\alpha|^2}\lambda_{\overline{z}}.
\end{aligned}
\end{align}
\end{lem}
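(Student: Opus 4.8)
The plan is to prove Lemma \ref{lem:diff200} by reducing every Minkowski inner product in \eqref{eq:naiseki3} to an elementary computation with explicit $2\times 2$ matrices, using two observations. First, since $A$ solves \eqref{eq:aprime} with $\det A\equiv 1$ (hence $\det A^{\ast}\equiv\overline{\det A}\equiv 1$), the map $P\mapsto APA^{\ast}$ preserves determinants on $M_2(\C)$; because the Lorentzian metric on $\herm(2)$ satisfies $\inner{X}{X}=-\det X$ and is bilinear, it equals the polarization of $X\mapsto-\det X$, i.e.\ $\inner{X}{Y}=-\tfrac12(\det(X+Y)-\det X-\det Y)$, which extends $\C$-bilinearly to all of $M_2(\C)$; from this and $\det(AMA^{\ast})=\det M$ one gets $\inner{APA^{\ast}}{AQA^{\ast}}=\inner{P}{Q}$ for all $P,Q\in M_2(\C)$. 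Second, writing $f=Ae_0A^{\ast}$ and $g=Ae_3A^{\ast}$, the polarized-determinant formula gives directly $\inner{P}{e_3}=\tfrac12(P_{11}-P_{22})$ and $\inner{P}{e_0}=-\tfrac12\trace P$. So, once each of $f'''$, $f''_{\overline z}$, $f'_{\overline z\overline z}$, $f_{\overline z\overline z\overline z}$ and their $g$-counterparts is written in the form $A(\,\cdot\,)A^{\ast}$ with an explicit matrix, the eight inner products in \eqref{eq:naiseki3} are read off as half the difference of the diagonal entries, respectively minus half the trace, of that matrix.

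To obtain those matrices one differentiates the second-order formulas \eqref{eq:fbibun}--\eqref{eq:gbibun} (equivalently \eqref{eq:fgbibun}) once more, using $A'=AD$, $A_{\overline z}=0$, $(A^{\ast})'=0$, $(A^{\ast})_{\overline z}=D^{\ast}A^{\ast}$, together with $D^2=\alpha\beta\,e_0$ and the entrywise derivatives of $D$ and $D^{\ast}$. For instance $f'''=A\bigl(D(D^2+D')+(D^2+D')'\bigr)A^{\ast}$, whose diagonal entries are $\alpha'\beta+2\alpha\beta'$ and $2\alpha'\beta+\alpha\beta'$, so that $\inner{f'''}{g}=\tfrac12(\alpha\beta'-\alpha'\beta)$; and $f''_{\overline z}=A(D^2+D')D^{\ast}A^{\ast}$, while $f'_{\overline z\overline z}$, $f_{\overline z\overline z\overline z}$, $g'''$, $g''_{\overline z}$, $g'_{\overline z\overline z}$, $g_{\overline z\overline z\overline z}$ are handled the same way (the $g$-versions carry an extra $e_3$ factor in the appropriate slot, cf.\ \eqref{eq:gbibun}). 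Each of these is a short mechanical calculation.

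It then remains to restrict to the singular set. The four identities whose right-hand side is $\lambda'/2$ or $\lambda_{\overline z}/2$ hold in fact on all of $U$, since $\lambda'=\alpha'\overline{\alpha}-\beta'\overline{\beta}$ and $\lambda_{\overline z}=\alpha\,\overline{\alpha'}-\beta\,\overline{\beta'}$ by \eqref{eq:deflambda} together with the holomorphy of $\alpha,\beta$. For the remaining four one substitutes the defining equation of the singular set, $\lambda=\alpha\overline{\alpha}-\beta\overline{\beta}=0$, in the form $\overline{\alpha}=|\alpha|^2/\alpha$, $\overline{\beta}=|\alpha|^2/\beta$ (so that $\overline{\alpha}\,\overline{\beta}=|\alpha|^4/(\alpha\beta)$, $\lambda'=|\alpha|^2(\alpha'/\alpha-\beta'/\beta)$ and $\lambda_{\overline z}=\overline{\lambda'}$); for instance $\tfrac12(\alpha\beta'-\alpha'\beta)=-\tfrac{\alpha\beta}{2}(\alpha'/\alpha-\beta'/\beta)=-\tfrac{\alpha\beta}{2|\alpha|^2}\lambda'$, which is exactly the asserted value of $\inner{f'''}{g}$, and likewise for the other three, and symmetrically for the $g$-terms.

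I expect the argument to be entirely routine; the only points requiring care are keeping track of the \emph{anti}-holomorphic dependence of $A^{\ast}$ (and of $\overline{\alpha}$, $\overline{\beta}$) on $z$ when differentiating the mixed derivatives, and not confusing the several third-order derivatives with one another, since they all have the shape $A(\text{matrix})A^{\ast}$ and differ only in the placement of the $D$, $D^{\ast}$ and $e_3$ factors. There is no conceptual obstacle.
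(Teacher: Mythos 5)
Your proposal is correct and follows essentially the same route as the paper: the paper's proof consists precisely of computing the explicit matrices $A(\,\cdot\,)A^{\ast}$ for $f'''$, $f''_{\overline z}$, $f'_{\overline z\overline z}$, $f_{\overline z\overline z\overline z}$ and their $g$-counterparts (its displays \eqref{eq:fzzz}--\eqref{eq:gzzz} agree with yours) and then reading off the inner products on the singular set. Your added justification via the invariance $\inner{APA^{\ast}}{AQA^{\ast}}=\inner{P}{Q}$ and the formulas $\inner{P}{e_3}=\tfrac12(P_{11}-P_{22})$, $\inner{P}{e_0}=-\tfrac12\trace P$ simply makes explicit the step the paper leaves implicit.
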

\begin{proof}
By a direct calculation, we have
\begin{align}\label{eq:fzzz}
\begin{aligned}
f'''
&=A
\begin{pmatrix} 
\alpha'\beta+2\alpha\beta' & \alpha''+\alpha^2\beta 
\\ \beta''+\alpha\beta^2 & 2\alpha'\beta+\alpha\beta'
\end{pmatrix} 
A^\ast,\ 
f''_{\overline{z}}
=A\begin{pmatrix} \alpha'\overline{\alpha} & \alpha|\beta|^2 
\\ \beta|\alpha|^2 & \beta'\overline{\beta}\end{pmatrix} A^\ast,\\
f'_{\overline{z}\overline{z}}
&=A\begin{pmatrix} 
\alpha\overline{\alpha'} & \overline{\beta}|\alpha|^2 \\
\overline{\alpha}|\beta|^2 & \beta\overline{\beta'} 
\end{pmatrix}
A^\ast,\ 
f_{\overline{z}\overline{z}\overline{z}}
=A
\begin{pmatrix} 
\overline{\alpha'}\overline{\beta}+2\overline{\alpha}\overline{\beta'} 
& \overline{\beta''}+\overline{\alpha}\overline{\beta}^2 \\
\overline{\alpha''}+\overline{\alpha}^2\overline{\beta} 
& 2\overline{\alpha'}\overline{\beta}+\overline{\alpha}\overline{\beta'} 
\end{pmatrix}
A^\ast,
\end{aligned}
\end{align}
and
\begin{align}\label{eq:gzzz}
\begin{aligned}
g'''
&=
A
\begin{pmatrix}
\alpha'\beta+2\alpha\beta' & -(\alpha''+\alpha^2\beta) \\
\beta''+\alpha\beta^2 & -(2\alpha'\beta+\alpha\beta')
\end{pmatrix}
A^\ast,\ 
g''_{\overline{z}}
=A
\begin{pmatrix} 
-\alpha'\overline{\alpha} & \alpha|\beta|^2 \\ 
-\beta|\alpha|^2 & \beta'\overline{\beta}
\end{pmatrix}
A^\ast,\\
g'_{\overline{z}\overline{z}}
&\!=\!
A
\begin{pmatrix}
-\alpha\overline{\alpha'} & -\overline{\beta}|\alpha|^2 \\
\overline{\alpha}|\beta|^2 & \beta\overline{\beta'}
\end{pmatrix}
A^\ast,\,
g_{\overline{z}\overline{z}\overline{z}}
\!=\!A
\begin{pmatrix} \overline{\alpha'}\overline{\beta}
+2\overline{\alpha}\overline{\beta'} & 
\overline{\beta''}+\overline{\alpha}\overline{\beta^2} \\
-(\overline{\alpha''}+\overline{\alpha^2}\overline{\beta}) & 
-(2\overline{\alpha'}\overline{\beta}+\overline{\alpha}\overline{\beta'})
\end{pmatrix}A^\ast.
\end{aligned}
\end{align}
This shows the assertion.
\end{proof}
Let us continue the calculations for $\kappa_t^h$ and $\kappa_t^d$.
By \eqref{eq:omega2} and \eqref{eq:naiseki3}, 
\begin{align*}
\Omega(\xi f,\nabla_{\eta_h}\eta_hf,\nabla_{\xi}\nabla_{\eta_h}\eta_hf)
&=\dfrac{i\left(\im\left(\sqrt{\alpha\beta}\lambda_{\overline{z}}\right)\right)^2}
{|\alpha|^8}(\alpha\beta(\lambda_{\overline{z}})^2-\overline{\alpha}\overline{\beta}(\lambda')^2)\\
&=\dfrac{4\re\left(\sqrt{\alpha\beta}\lambda_{\overline{z}}\right)
\left(\im\left(\sqrt{\alpha\beta}\lambda_{\overline{z}}\right)\right)^3}{|\alpha|^8},\\
\Omega(\xi g,\nabla_{\eta_d}\eta_dg,\nabla_{\xi}\nabla_{\eta_d}\eta_dg)&=
\dfrac{i\left(\re\left(\sqrt{\alpha\beta}\lambda_{\overline{z}}\right)\right)^2}{|\alpha|^8}
(\alpha\beta(\lambda_{\overline{z}})^2-\overline{\alpha}\overline{\beta}(\lambda')^2)\\
&=\dfrac{4\im\left(\sqrt{\alpha\beta}\lambda_{\overline{z}}\right)
\left(\re\left(\sqrt{\alpha\beta}\lambda_{\overline{z}}\right)\right)^3}{|\alpha|^8}
\end{align*}
hold on the singular set. 
Therefore by \eqref{eq:ksknktkikc}, 
the formulas
\eqref{eq:kappath} and \eqref{eq:kappatd}, are proven.
Finally, we consider the cuspidal curvatures $\kappa_c^h$ and $\kappa_c^d$. 
By \eqref{eq:xifxigsing},
\begin{equation*}
|\xi f|=2|\alpha|^2|C_h|=2\left|\im\left(\sqrt{\alpha\beta}\lambda_{\overline{z}}\right)\right|, \ \ 
|\xi g|=2|\alpha|^2|C_d|=2\left|\re\left(\sqrt{\alpha\beta}\lambda_{\overline{z}}\right)\right|
\end{equation*}
hold on the singular set.
Moreover, we have
\begin{align*}\allowdisplaybreaks[3]
\nabla_{\eta_h}\nabla_{\eta_h}\eta_hf&\equiv\dfrac{i}{\sqrt{\alpha\beta}}
\left(\dfrac{-1}{\alpha\beta}f'''+\dfrac{2}{|\alpha|^2}f''_{\overline{z}}
-\dfrac{1}{\overline{\alpha}\overline{\beta}}f'_{\overline{z}\overline{z}}\right)\\
&-\dfrac{i}{\sqrt{\overline{\alpha}\overline{\beta}}}
\left(\dfrac{-1}{\alpha\beta}f''_{\overline{z}}+\dfrac{2}{|\alpha|^2}f'_{\overline{z}\overline{z}}
-\dfrac{1}{\overline{\alpha}\overline{\beta}}f_{\overline{z}\overline{z}\overline{z}}\right)\mod\mathcal{F},\\
\nabla_{\eta_d}\nabla_{\eta_d}\eta_dg&\equiv\dfrac{1}{\sqrt{\alpha\beta}}
\left(\dfrac{1}{\alpha\beta}g'''+\dfrac{2}{|\alpha|^2}g''_{\overline{z}}
+\dfrac{1}{\overline{\alpha}\overline{\beta}}g'_{\overline{z}\overline{z}}\right)\\
&+\dfrac{1}{\sqrt{\overline{\alpha}\overline{\beta}}}
\left(\dfrac{1}{\alpha\beta}g''_{\overline{z}}+\dfrac{2}{|\alpha|^2}g'_{\overline{z}\overline{z}}
+\dfrac{1}{\overline{\alpha}\overline{\beta}}g_{\overline{z}\overline{z}\overline{z}}\right)\mod\mathcal{G}
\end{align*}
on the singular set. 
Thus we have 
\begin{align*}
\inner{g}{\nabla_{\eta_h}\nabla_{\eta_h}\eta_hf}
&=\dfrac{2i}{|\alpha|^2}\left(\dfrac{\lambda'}{\sqrt{\alpha\beta}}-\dfrac{\lambda_{\overline{z}}}
{\sqrt{\overline{\alpha}\overline{\beta}}}\right)
=\dfrac{4\im\left(\sqrt{\alpha\beta}\lambda_{\overline{z}}\right)}{|\alpha|^4},\\
\inner{f}{\nabla_{\eta_d}\nabla_{\eta_d}\eta_dg}
&=\dfrac{2}{|\alpha|^2}\left(\dfrac{\lambda'}{\sqrt{\alpha\beta}}
+\dfrac{\lambda_{\overline{z}}}{\sqrt{\overline{\alpha}\overline{\beta}}}\right)
=\dfrac{4\re\left(\sqrt{\alpha\beta}\lambda_{\overline{z}}\right)}{|\alpha|^4}
\end{align*}
on the singular set by \eqref{eq:naiseki3}. 
Hence we obtain \eqref{eq:kappach} and \eqref{eq:kappacd}. 
\end{proof}
We note that $\kappa_s^h<0$ follows from the general theory (\cite[Theorem 3.1]{front}) 
since the extrinsic Gaussian curvature of a flat front $f$ is $1>0$. 
However, $\kappa_s^d<0$ does not follow by the general theory since 
the extrinsic Gaussian curvature of a flat front $g$ is $-1<0$.

\subsection{Relationships between curvatures and singularities of the dual surfaces}\label{sec:relation}
For a non-degenerate singular point $p$ of $f$ (resp. $g$), 
it is known that $f$ at $p$ (resp. $g$ at $p$) is not a cuspidal edge 
if and only if $C_h(p)=0$ (resp. $C_d(p)=0$), where $C_h$ 
(resp. $C_d$) is as in \eqref{eq:ch} (resp. \eqref{eq:cd}) 
(cf. \cite{KRSUY}). 
By {Theorem} \ref{prop:curvatures}, it follows that 
a flat front $f$ (resp. $g$) in $H^3$ (resp. $S^3_1$) is not a cuspidal edge 
at a non-degenerate singular point $p$ 
if and only if $\kappa_t^d$ (resp. $\kappa_t^h$) vanishes at $p$.
Therefore we consider relation between the type of singularity 
which is not a cuspidal edge of $f$ or $g$ and  
behavior of $\kappa_t^d$ or $\kappa_t^h$. 

Let $U\subset \C$ be a simply-connected domain
and $\mathcal{O}(U)$ the set of holomorphic functions on $U$. 
Then, for $h\in\mathcal{O}(U)$, we can construct flat fronts 
$$f=f_h:U\to H^3,\quad g=g_h:U\to S^3_1$$
which are represented by a pair of holomorphic 
functions $(\alpha,\beta)=(e^h,1)$. 
Converse is also true, namely, for a flat front $f:U\to H^3$
without umbilical point,
then we can choose a suitable complex coordinate $z$ such that
$\alpha\,dz=e^h\,dz$ and $\beta\,dz=dz$.
(see \cite[p 323]{KRSUY}).

Let $f:U\to H^3$ and $g:U\to S^3_1$ be
flat fronts defined by \eqref{eq:fandg}
by the data $(\alpha,\beta)=(e^h,1)$.
Then the sets of singular points of $f$ and $g$ are 
$$\Sigma(f)=\Sigma(g)=\{z\in U\ |\ h(z)+\overline{h(z)}=0\}.$$
Moreover, $\lambda=h+\overline{h}$ is an identifier of singularity. 
By Proposition \ref{prop:criteria},
it holds that:
\begin{itemize}
\item $f$ at $p(=\gamma(0))$ is a cuspidal edge if and only if 
$\im\big(e^{-\frac{h}{2}}h'\big)\neq0$ at $p$,
\item $f$ at $p$ is a swallowtail if and only if 
$\im\big(e^{-\frac{h}{2}}h'\big)=0$ and 
$\re\big(e^{-h}(h''-\frac{1}{2}(h')^2)\big)\neq0$ at $p$.
\item $g$ is a cuspidal edge at $p$ if and only if 
$\re\big(e^{-\frac{h}{2}}h'\big)\neq0$ at $p$,
\item $g$ is a swallowtail at $p$ if and only if 
$\re\big(e^{-\frac{h}{2}}h'\big)=0$ and 
$\re\big(e^{-h}(h''-\frac{1}{2}(h')^2)\big)\neq0$ at $p$.
\end{itemize}
We have the following theorem.
\begin{thm}\label{thm:dualsing}
Let $f:U\to H^3$ and $g:U\to S^3_1$ be flat fronts and $p$ be a non-degenerate singular point of both $f$ and $g$. 
Let $\gamma(t)$ be a singular curve through $p=\gamma(0)$. 
Assume that $f$ at $p$ is a cuspidal edge and $\kappa_t^h(p)=0$ 
$($\/resp. $g$ at $p$ is a cuspidal edge and $\kappa_t^d(p)=0\/)$. 
Then $g$ $($\/resp. $f\/)$ is a swallowtail at $p$ 
if and only if $\left.\frac{d}{dt}{\kappa_t^h}\right|_{t=0}\neq0$ 
$($\/resp. $\left.\frac{d}{dt}{\kappa_t^d}\right|_{t=0}\neq0\/)$.
\end{thm}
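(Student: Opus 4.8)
The plan is to reduce everything to the Weierstrass data via the explicit formula for $\kappa_t^h$ from Theorem~\ref{prop:curvatures}, and then compare with the swallowtail criterion from Proposition~\ref{prop:criteria}. By \eqref{eq:kappath}, along the singular curve we have $\kappa_t^h = C_d/C_h$ with $C_h,C_d$ as in \eqref{eq:ch}, \eqref{eq:cd}. The hypothesis that $f$ is a cuspidal edge at $p$ means $C_h(p)\neq 0$ (Proposition~\ref{prop:criteria}~(I)\ref{crit:ce}), so $\kappa_t^h$ is a well-defined smooth function near $t=0$ and its vanishing at $t=0$ is equivalent to $C_d(p)=0$. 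Since $g$ is automatically a front and $p$ is non-degenerate, Proposition~\ref{prop:criteria}~(II) tells us that $g$ is a swallowtail at $p$ if and only if $C_d(p)=0$ together with the Schwarzian condition \eqref{eq:swcond}. Thus the content of the theorem is: under the standing assumptions $C_h(p)\neq 0$ and $C_d(p)=0$, the inequality $\re\big((S(\alpha)-S(\beta))/(\alpha\beta)\big)\neq 0$ at $p$ is equivalent to $\frac{d}{dt}\kappa_t^h\big|_{t=0}\neq 0$.

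The key computation is therefore to differentiate $\kappa_t^h(t)=C_d(\gamma(t))/C_h(\gamma(t))$ at $t=0$. Since $C_d(p)=0$ and $C_h(p)\neq 0$, the quotient rule gives $\frac{d}{dt}\kappa_t^h\big|_{t=0} = \frac{1}{C_h(p)}\,\frac{d}{dt}\big(C_d\circ\gamma\big)\big|_{t=0}$. So it suffices to show that $\frac{d}{dt}\big(C_d\circ\gamma\big)\big|_{t=0}$ is a nonzero multiple of $\re\big((S(\alpha)-S(\beta))/(\alpha\beta)\big)(p)$. But this is exactly the computation already carried out in the proof of Proposition~\ref{prop:criteria}~(II): there one shows that $\delta_d(t)=\det(\xi(t),\eta_d(t))$ is proportional to $\tilde\delta_d=\re\big(\tfrac{1}{\sqrt{\alpha\beta}}(\tfrac{\alpha'}{\alpha}-\tfrac{\beta'}{\beta})\big)$, and on the singular set $\lambda'=|\alpha|^2(\tfrac{\alpha'}{\alpha}-\tfrac{\beta'}{\beta})$, so $C_d = \re(\lambda'/\sqrt{\alpha\beta}) = |\alpha|^2\,\tilde\delta_d$; the computation there shows that when $\tilde\delta_d(0)=0$,
\[
\frac{d}{dt}\tilde\delta_d\Big|_{t=0} = \frac{i|\alpha|^4}{\sqrt{\alpha\beta}}\Big(\frac{\alpha'}{\alpha}-\frac{\beta'}{\beta}\Big)\,\re\left(\frac{S(\alpha)-S(\beta)}{\alpha\beta}\right)(p),
\]
and the scalar prefactor $\tfrac{i|\alpha|^4}{\sqrt{\alpha\beta}}(\tfrac{\alpha'}{\alpha}-\tfrac{\beta'}{\beta})$ is nonzero at $p$ because $\alpha,\beta$ are nowhere zero and $\tfrac{\alpha'}{\alpha}-\tfrac{\beta'}{\beta}$ vanishing at $p$ would force $\lambda'(p)=0$, contradicting non-degeneracy of $p$. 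One also needs the product-rule term $(\tfrac{d}{dt}|\alpha|^2)\tilde\delta_d$ in $\frac{d}{dt}C_d$, but this vanishes at $t=0$ since $\tilde\delta_d(0)=0$. Hence $\frac{d}{dt}(C_d\circ\gamma)\big|_{t=0}$ is a nonzero multiple of $\re\big((S(\alpha)-S(\beta))/(\alpha\beta)\big)(p)$, which gives the desired equivalence; the statement for $g$ a cuspidal edge with $\kappa_t^d(p)=0$ follows by the completely symmetric argument using $\kappa_t^d=-C_h/C_d$ (from \eqref{eq:kappatd}) and part~(I) of Proposition~\ref{prop:criteria}.

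The main obstacle is essentially bookkeeping rather than a genuine difficulty: one must be careful that $\kappa_t^h$ is a priori only defined along the singular curve $\gamma$ (a function of $t$), so "$\frac{d}{dt}\kappa_t^h$" must be interpreted as the $t$-derivative of this restriction, and one must make sure the chain rule through $\gamma$ matches the derivative $\frac{d}{dt}(\tilde\delta_d\circ\gamma)$ computed in the earlier proof. A second point to verify is that the non-degeneracy of $p$ (used to guarantee $\lambda'(p)\neq 0$, equivalently $\tfrac{\alpha'}{\alpha}-\tfrac{\beta'}{\beta}\neq 0$ at $p$) is available: this is part of the hypothesis "$p$ a non-degenerate singular point of both $f$ and $g$." Once these are pinned down, the theorem is an immediate consequence of the quotient rule applied to the formulas of Theorem~\ref{prop:curvatures} together with the Schwarzian computation already present in the proof of Proposition~\ref{prop:criteria}.
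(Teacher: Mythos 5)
Your proof is correct and follows essentially the same route as the paper: both arguments use the quotient rule together with $C_h(p)\neq0$, $C_d(p)=0$ to reduce $\left.\frac{d}{dt}\kappa_t^h\right|_{t=0}\neq0$ to the non-vanishing of the derivative along $\gamma$ of the dual cuspidal-edge criterion function, and then identify that derivative with a nonzero multiple of $\re\big((S(\alpha)-S(\beta))/(\alpha\beta)\big)(p)$, so the conclusion follows from the swallowtail criterion. The only difference is presentational: the paper normalizes to $(\alpha,\beta)=(e^h,1)$ and recomputes the derivative explicitly in that gauge, whereas you keep general Weierstrass data and recycle the computation of $\left.\frac{d}{dt}\tilde\delta_d\right|_{t=0}$ already carried out in the proof of Proposition \ref{prop:criteria}.
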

\begin{proof}
Since this is a local situation, 
we may assume that both $f$ and $g$ are constructed by the data $(\alpha,\beta)=(e^h,1)$, for $h\in\mathcal{O}(U)$. 
Let $\gamma(t)$ be a parametrization of $\Sigma(f)=\Sigma(g)$.
By {Theorem} \ref{prop:curvatures}, 
$\kappa_c^h$, $\kappa_c^d$, $\kappa_t^h$ and $\kappa_t^d$ are given by
\begin{align}\label{eq:kappacteh}
\begin{aligned}
\kappa_c^h(t)&=\frac{4\im\big(e^{-\frac{h}{2}}h'\big)}
{|\im\big(e^{-\frac{h}{2}}h'\big)|^{3/2}}(\gamma(t)),\quad & 
\kappa_c^d(t)&=-\frac{4\re\big(e^{-\frac{h}{2}}h'\big)}
{|\re\big(e^{-\frac{h}{2}}h'\big)|^{3/2}}(\gamma(t)),\\
\kappa_t^h(t)&=\frac{\re\big(e^{-\frac{h}{2}}h'\big)}
{\im\big(e^{-\frac{h}{2}}h'\big)}(\gamma(t)),\quad & 
\kappa_t^d(t)&=\frac{\im\big(e^{-\frac{h}{2}}h'\big)}
{\re\big(e^{-\frac{h}{2}}h'\big)}(\gamma(t)).
\end{aligned}
\end{align}
First, we assume that $\kappa_t^d=0$ and $\kappa_c^d\neq0$ at $p$. 
This is equivalent to the condition that $\im\big(e^{-\frac{h}{2}}h'\big)=0$ 
and $\re\big(e^{-\frac{h}{2}}h'\big)\neq0$ at $p$. 
Thus $(d/dt)\kappa_t^d(0)\neq0$ if and only if 
$(d/dt)\im\big(e^{-\frac{h}{2}}h'\big)(0)\neq0$. 
By a direct calculation, we see that 
\begin{align*}
&\frac{d}{dt}\Big(\im\big(e^{-\frac{h}{2}}h'\big)(\gamma(t))\Big)\\
=&\frac{1}{2i}
\left(e^{-\frac{h}{2}}\left(h''-\frac{1}{2}(h')^2\right)\dot{\gamma}
+e^{-\frac{\overline{h}}{2}}\left(\overline{h''}
-\frac{1}{2}(\overline{h'})^2\right)\overline{\dot{\gamma}} \right)(t)\\
=&\frac{1}{2}
\left(-e^{-\frac{h}{2}}\left(h''-\frac{1}{2}(h')^2\right)\overline{h'}
+e^{-\frac{\overline{h}}{2}}\left(\overline{h''}
-\frac{1}{2}(\overline{h'})^2\right)h'\right)(t).
\end{align*}
Since we assume that $h+\overline{h}=0$ and 
$e^{-\frac{h}{2}}h'-e^{-\frac{\overline{h}}{2}}\overline{h'}=0$ at $p$, 
we have $\overline{h'}=e^{-h}h'$ and 
$h'=e^{-\overline{h}}\overline{h'}$. 
Hence it holds that 
\begin{align*}
\left.\frac{d}{dt}\Big(\im\big(e^{-\frac{h}{2}}h'\big)(\gamma(t))\Big)
\right|_{t=0}&=
\frac{1}{2}\left(-e^{-h}\left(h''-\frac{1}{2}(h')^2\right)
e^{-\frac{h}{2}}h'\right.\\
&\hspace{2cm}\left.+e^{-\overline{h}}\left(\overline{h''}
-\frac{1}{2}(\overline{h'})^2\right)
e^{-\frac{\overline{h}}{2}}\overline{h'}\right)(p)\\
&=-(e^{-\frac{h}{2}}h')(p)\re\left(\left(e^{-h}\left(h''-\frac{1}{2}(h')^2
\right)\right)(p)\right),
\end{align*}
where we used the relation 
$e^{-\frac{h}{2}}h'-e^{-\frac{\overline{h}}{2}}\overline{h'}=0$ at $p$ again
to have the second equality.
Thus we have the assertion for $f$. 

The case for $g$ can be proven by the same computation
using \eqref{eq:kappacteh}.

\end{proof}
{
In \cite[Proposition 3.8]{kokuumelw}, 
an equivalent statement of Theorem \ref{thm:dualsing}
is obtained from a different viewpoint.
In fact, $\kappa_t^h$ coincides with
$\Delta$ in \cite[p. 1910]{kokuumelw}
when $\ep=0$ (a linear Weingarten front is a flat front
if $\ep=0$) in their notation.
Thus Theorem \ref{thm:dualsing} clarifies
the geomteric meaning of $\Delta$ in \cite{kokuumelw}
for a flat front in $H^3$.
In \cite{kokuumelw}, it is shown that
the co-orientabilities and 
orientabilities of the original front and its dual
have a connection.
Furthermore, zig-zag numbers of them are studied.}


\section{Lines of curvature and cone-like singular points}
We consider the condition that the singular curve $\gamma$ of $f$ and $g$ 
are a line of curvature. 
Let $k:U\to N$ be a frontal into a 
Riemannian or semi-Riemannian $3$-manifold $N$. 
Let $I$ be an interval.
A curve $\gamma:I\to U$ is a {\it line of curvature\/} if 
${\rm I}(k\circ\gamma')$ is parallel to ${\rm I\!I}(k\circ\gamma')$, where 
${\rm I}$ and ${\rm I\!I}$ are the first and the second
fundamental matrices, and
they are regarded as linear maps $T_p\nu^\perp\to T_p\nu^\perp\subset T_pN$
and $k\circ\gamma'(t)\in T_p\nu^\perp$ $(p=\gamma(t))$.
\begin{prop}\label{prop:curvline}
Let $f:U\to H^3$ and $g:U\to S^3_1$ be flat fronts constructed 
by \eqref{eq:fandg}.
Let $p\in\Sigma(f)=\Sigma(g)$ be a cuspidal edge of $f$ $($\/resp. $g\/)$
and $\gamma$ a singular curve through $p=\gamma(0)$. 
Then $\gamma$ is a line of curvature of $f$ $($\/resp. $g\/)$ 
if and only if $\kappa_t^h$ $($\/resp. $\kappa_t^d\/)$ 
vanishes identically on $\gamma$.
\end{prop}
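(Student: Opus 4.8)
The plan is to reduce the statement to a criterion for when the cuspidal torsion of a cuspidal edge vanishes along the whole singular curve, and to use the explicit formulas of Theorem \ref{prop:curvatures}. Recall the general fact (see e.g.\ \cite{invce, front}) that for a front $k$ with a cuspidal edge along $\gamma$, the singular curve $\gamma$ is a line of curvature of $k$ if and only if the cuspidal torsion $\kappa_t$ vanishes identically along $\gamma$; this is exactly because the normal direction of the front together with the singular tangent direction span a coordinate plane for the shape operator precisely when the torsion-type term drops out. So the proposition follows once we know that $\kappa_t^h$ (resp.\ $\kappa_t^d$) is the cuspidal torsion appearing in that general criterion, which is the content of the definition \eqref{eq:ksknktkikc} and Theorem \ref{prop:curvatures}.

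First I would make explicit what "line of curvature" means here in the semi-Riemannian setting: along the singular curve of a front the unit normal $\nu$ is well defined (this is part of being a front), so $\mathrm{I}$ and $\mathrm{I\!I}$ are well-defined bilinear forms on $T_p\nu^\perp$, and the condition is that $\hat\gamma'(t)=dk(\gamma'(t))$ is an eigenvector of the shape operator (the $\mathrm{I}$-dual of $\mathrm{I\!I}$). I would then invoke, or reprove in two lines, the equivalence "$\hat\gamma'$ is a principal direction $\iff \kappa_t=0$ at that point" which holds pointwise; applying it at every point of $\gamma$ gives "$\gamma$ is a line of curvature $\iff\kappa_t\equiv0$ on $\gamma$." For this to be legitimate in $H^3$ and $S^3_1$ one must check the cuspidal torsion $\kappa_t$ defined intrinsically via the ambient volume form and metric connection in \eqref{eq:ksknktkikc} agrees with the $\R^3$-model invariant used in the general theorem; this is a standard matter of rewriting the shape operator of a front in a space form using the Gauss map $g$ (for $f$) or $f$ (for $g$), which has already been set up via the $\Delta_1$-duality, so I would cite it rather than redo it.

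The second, concrete step is then the identification of $\kappa_t^h$ (resp.\ $\kappa_t^d$) with this cuspidal torsion: this is literally what Theorem \ref{prop:curvatures} and the preceding definitions provide, so combining the general criterion with \eqref{eq:kappath} (resp.\ \eqref{eq:kappatd}) closes the argument. An alternative, if one prefers a self-contained derivation avoiding the general $\R^3$ criterion, is to compute $\mathrm{I}$ and $\mathrm{I\!I}$ directly along $\gamma$ from the representation \eqref{eq:fandg}, using the second-derivative formulas in Lemma \ref{lem:diff100} and the inner products collected in the proof of Lemma \ref{lem:etaetaxi}, and to check that the off-diagonal term of $\mathrm{I\!I}$ in a basis adapted to $(\xi f,\nabla_\eta(\eta f))$ is proportional to $\re(\sqrt{\alpha\beta}\lambda_{\overline z})$ (resp.\ $\im(\sqrt{\alpha\beta}\lambda_{\overline z})$), i.e.\ to $C_d$ (resp.\ $C_h$), hence to $\kappa_t^h$ (resp.\ $\kappa_t^d$) by \eqref{eq:kappath}--\eqref{eq:kappatd}.

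The main obstacle is exactly the bookkeeping in that identification step: one must be careful that the ``cuspidal torsion'' as defined by the ambient formula \eqref{eq:ksknktkikc} coincides, as a function on $\gamma$, with the quantity whose vanishing characterizes principal directions, and that the semi-Riemannian signature in $S^3_1$ does not flip a sign that would break the equivalence. Once that compatibility is in hand (and it is, since $f$ and $g$ are genuine fronts with everywhere-defined unit normal and the shape operator is diagonalizable in the relevant directions because $\kappa_\nu\equiv0$ makes the singular direction and the null direction $\mathrm{I}$-orthogonal after the $\nabla$-normalization used in Lemma \ref{lem:etaetaxi}), the proposition is immediate from Theorem \ref{prop:curvatures}.
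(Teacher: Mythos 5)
Your proposal is correct, and your \emph{primary} route differs from the paper's: you invoke the general criterion ``a singular curve of cuspidal edges is a line of curvature iff the cuspidal torsion vanishes identically'' (known in the Euclidean setting from \cite{teraprin,istce}) and then reduce everything to the identification of $\kappa_t^h$, $\kappa_t^d$ in Theorem \ref{prop:curvatures}. The paper explicitly acknowledges that this transfer works (``the same proof works''), but deliberately avoids it and instead gives a short self-contained computation: $\gamma$ is a line of curvature of $f$ iff $\Omega(\xi f,g,\xi g)=0$, and by \eqref{eq:xifxigsing} this quantity equals $4\re\big(\sqrt{\alpha\beta}\lambda_{\overline z}\big)\im\big(\sqrt{\alpha\beta}\lambda_{\overline z}\big)=4|\alpha|^4C_hC_d$, so at a cuspidal edge of $f$ (where $C_h\neq0$) it vanishes iff $C_d=0$ iff $\kappa_t^h=C_d/C_h=0$. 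Your ``alternative, self-contained'' route --- checking that the off-diagonal entry of $\mathrm{I\!I}$ in the orthogonal frame $(\xi f,\nabla_{\eta}\eta f)$ is proportional to $C_d$ --- is essentially this same computation, since $\mathrm{I\!I}(\xi f,\cdot)=-\inner{\xi g}{\cdot}$ and $\xi g$ is a multiple of $C_d$ on the singular set. What the paper's route buys is independence from the Euclidean literature and from any compatibility check between the ambient formula \eqref{eq:ksknktkikc} and the $\R^3$-model invariant; what your route buys is that the statement is seen as an instance of a general phenomenon rather than a feature of the Weierstrass data. One small correction: the orthogonality $\inner{\nabla_{\eta}\eta f}{\xi f}=0$ is not a consequence of $\kappa_\nu\equiv0$; it is the content of Lemma \ref{lem:etaetaxi}, proved there by direct computation for these specific flat fronts (for a general cuspidal edge this inner product need not vanish, which is why the second term in the definition of $\kappa_t$ is present at all). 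You cite the right lemma, so this does not affect the validity of the argument.
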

Although this is shown for the case of fronts in Euclidean $3$-space
(see \cite[Proposition 3.3]{teraprin}, \cite[p. 95]{istce}),
and the same proof works,
we give a proof here using the representation formula.
\begin{proof}
The curve $\gamma$ is a line of curvature of $f$ 
(\/resp. $g$\/) if and only if 
the following equation holds:
\begin{equation*}
\Omega(\xi f,g,\xi g)(\gamma(t))=0\quad (\/\text{resp.}\ \Omega(\xi g,f,\xi f)(\gamma(t))=0\/).
\end{equation*}
By a direct calculation, we see that 
\begin{align*}
\Omega(\xi f,g,\xi g)(\gamma(t))&=\inner{\xi f\times g}{\xi g}(\gamma(t))\\
&=4\re\big(\sqrt{\alpha\beta}\lambda_{\overline{z}}\big)(\gamma(t))
\im\big(\sqrt{\alpha\beta}\lambda_{\overline{z}}\big)(\gamma(t))\\
\big(\/\text{resp.}\ 
\Omega(\xi g,f,\xi f)(\gamma(t))&=\inner{\xi g\times f}{\xi f}(\gamma(t))\\
&=-4\re\big(\sqrt{\alpha\beta}\lambda_{\overline{z}}\big)(\gamma(t))
\im\big(\sqrt{\alpha\beta}\lambda_{\overline{z}}\big)(\gamma(t))\big).
\end{align*}
Thus we have the assertion by {Theorem} \ref{prop:curvatures} 
if $f$ (\/resp. $g$\/) has cuspidal edges along $\gamma$.
\end{proof}
We define another kind of singular point so called 
cone-like singularity (\cite[p. 305]{KRSUY}).
\begin{defn}
Let $h:U\to M$ be a frontal from a region into a $3$-dimensional
manifold. Let $p\in U$ be a non-degenerate singular point.
Then $p$ is a {\it cone-like singularity\/} if
there exists a neighborhood $V$ of $p$ such that
for any $q\in \Sigma(h)$,
a null vector field $\eta_q$ is tangent to $\Sigma(h)$.
\end{defn}
Assume that the singular curve $\gamma$ of $f$ (\/resp. $g$\/)
consists of cuspidal edges.
By Proposition \ref{prop:curvline}, if $\gamma$ is a 
line of curvature of $f$ (\/resp. $g$\/), 
then $\xi$ and $\eta_d$ (\/resp. $\xi$ and $\eta_h$) 
are parallel along $\gamma$. 
Thus we have the following corollary.
\begin{cor}\label{cor:conelike}
Let $f:U\to H^3$ and $g:U\to S^3_1$ be 
flat fronts and $p\in\Sigma(f)=\Sigma(g)$ a non-degenerate singular point. 
If the singular curve $\gamma$ passing through $p$ consists 
of cuspidal edges of $f$ 
$($\/resp. $g\/)$ and is a line of curvature 
of $f$ $($\/resp. $g\/)$, 
then $g$ $($\/resp. $f\/)$ has a cone-like singularity at $p$.
\end{cor}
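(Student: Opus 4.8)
The plan is to reduce the claim to the statement recorded just before the corollary --- that under the hypotheses the null vector field of the dual front is everywhere proportional, along $\gamma$, to the tangent vector of the singular curve --- and then to observe that this proportionality is exactly the defining condition of a cone-like singularity. I will carry out the case in which $f$ has a cuspidal edge along $\gamma$; the case of $g$ follows by interchanging $h\leftrightarrow d$, $f\leftrightarrow g$, $\eta_h\leftrightarrow\eta_d$, $C_h\leftrightarrow C_d$.

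First I would note that, since $\gamma$ consists of cuspidal edges of $f$, Proposition \ref{prop:criteria}~(I) gives $C_h\neq0$ along $\gamma$, so $\kappa_t^h$ is well defined on all of $\gamma$ and equals $C_d/C_h$ by \eqref{eq:kappath}. By Proposition \ref{prop:curvline}, the hypothesis that $\gamma$ is a line of curvature of $f$ is equivalent to $\kappa_t^h$ vanishing identically on $\gamma$; together with $C_h\neq0$ this forces $C_d\equiv0$ on $\gamma$. Now I invoke \eqref{eq:delta1} together with \eqref{eq:cd}: along $\gamma$ the function $\delta_d(t)=\det(\xi(t),\eta_d(t))$ is a nonzero functional multiple of $\re(\sqrt{\alpha\beta}\,\lambda_{\overline{z}})=|\alpha|^2C_d$, hence $\delta_d\equiv0$ on $\gamma$. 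Because $\xi=\dot\gamma$ never vanishes (the singular curve is regular) and $\eta_d\neq0$ (as $\alpha\beta$ never vanishes), the identity $\delta_d\equiv0$ says exactly that the null vector field $\eta_d$ of $g$ is tangent to $\gamma$ at every point of $\gamma$ near $p$.

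To finish, I would use the non-degeneracy of $p$ and the coincidence $\Sigma(f)=\Sigma(g)$ to see that $\Sigma(g)$ agrees with the regular curve $\gamma$ on some neighborhood $V$ of $p$; the previous step then asserts that a null vector field of $g$ is tangent to $\Sigma(g)$ at every $q\in\Sigma(g)\cap V$, which is precisely the definition of a cone-like singularity of $g$ at $p$. The symmetric argument, using $\kappa_t^d=-C_h/C_d$ from \eqref{eq:kappatd} and the evident analogue of \eqref{eq:delta1} expressing $\delta_h=\det(\xi,\eta_h)$ as a nonzero multiple of $\im(\sqrt{\alpha\beta}\,\lambda_{\overline{z}})=|\alpha|^2C_h$ along $\gamma$, handles the assertion for $f$. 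There is no real obstacle here, since the analytic content is already contained in Theorem \ref{prop:curvatures} and Proposition \ref{prop:curvline}; the only points requiring a moment's care are that $\xi$ and $\eta_d$ are both nonvanishing, so that the vanishing of $\delta_d$ genuinely yields tangency of $\eta_d$ to $\gamma$ rather than degeneration of one of these vectors, and that $\Sigma(g)$ is a genuine regular curve near $p$ so that the ``for every $q$ in a neighborhood'' clause in the definition of cone-like singularity is verified.
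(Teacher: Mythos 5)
Your proposal is correct and follows essentially the same route as the paper, which deduces from Proposition \ref{prop:curvline} (via $\kappa_t^h=C_d/C_h$ and the vanishing of $\delta_d=\det(\xi,\eta_d)$) that $\xi$ and $\eta_d$ are parallel along $\gamma$, and then invokes the definition of a cone-like singularity. The paper states this only as a one-sentence remark before the corollary; you have merely filled in the same chain of implications with more care about the nonvanishing of $\xi$ and $\eta_d$.
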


\proof[Acknowledgments]
The authors would like to thank Masaaki Umehara
for valuable comments, and thank
Shun Iguchi and Mao Nomura for helping calculations. 


\end{document}